\documentclass[12pt,reqno,twoside]{amsart}
\usepackage{graphicx}
\usepackage{amssymb}
\usepackage{epstopdf}
\usepackage[asymmetric,top=3.5cm,bottom=4.3cm,left=3.1cm,right=3.1cm]{geometry}
\geometry{a4paper}
\usepackage{bm}

\usepackage{booktabs} 
\usepackage{array} 
\usepackage{paralist} 
\usepackage{verbatim} 
\usepackage{subfig} 
\usepackage{tabularx}
\usepackage{amsmath,amsfonts,amsthm,mathrsfs,amssymb,cite}
\usepackage[usenames]{color}

\newtheorem{thm}{Theorem}[section]

\newtheorem{lem}{Lemma}[section]

\theoremstyle{definition}
\newtheorem{defn}{Definition}[section]
\theoremstyle{remark}

\newtheorem{rem}{Remark}[section]
\numberwithin{equation}{section}

\usepackage{hyperref}
\hypersetup{
  colorlinks   = true,    
  urlcolor     = blue,    
  linkcolor    = blue,    
  citecolor    = red      
}
\usepackage{stmaryrd} 
\usepackage[disable,textwidth=1in,textsize=tiny]{todonotes}

\title[Nearly Non-scattering Electromagnetic Wave Set]{Nearly Non-scattering Electromagnetic Wave Set and Its Application}

\author{Hongyu Liu}
\address{Department of Mathematics, Hong Kong Baptist University, Kowloon Tong, Hong Kong SAR.\vspace*{-2mm}}
\address{and\vspace*{-2mm}}
\address{HKBU Institute of Research and Continuing Education, Virtual University Park, Shenzhen, P. R. China.}
\email{hongyu.liuip@gmail.com; hongyuliu@hkbu.edu.hk}

\author{Yuliang Wang}
\address{Department of Mathematics, Hong Kong Baptist University, Kowloon Tong, Hong Kong SAR.}
\email{yuliang@hkbu.edu.hk}

\author{Shuhui Zhong}
\address{Department of Mathematics, Tianjin University, Tianjin, P. R. China.}
\email{shuhuizhong@126.com}

\begin{document}
\maketitle

\begin{abstract}

For any inhomogeneous compactly supported electromagnetic (EM) medium, it is shown that there exists an infinite set of linearly independent electromagnetic waves which generate nearly vanishing scattered wave fields. If the inhomogeneous medium is coated with a layer of properly chosen conducting medium, then the wave set is generated from the Maxwell-Herglotz approximation to the interior PEC or PMC eigenfunctions and depends only on the shape of the inhomogeneous medium. If no such a conducting coating is used, then the wave set is generated from the Maxwell-Herglotz approximation to the generalised interior transmission eigenfunctions and depends on both the content and the shape of the inhomogeneous medium. We characterise the nearly non-scattering wave sets in both cases with sharp estimates. The results can be used to give a conceptual design of a novel shadowless lamp. The crucial ingredient is to properly choose the source of the lamp so that nearly no shadow will be produced by the surgeons operating under the lamp. 

\medskip

\medskip

\noindent{\bf Keywords:}~~electromagnetic scattering; non-scattering waves; invisibility; interior eigenfunctions; shadowless lamp

\noindent{\bf 2010 Mathematics Subject Classification:}~~78A45, 35Q60, 35R30

\end{abstract}

\section{Introduction}

\subsection{Background and practical motivation}

Invisibility cloaking has received significant attentions in recent years in the scientific community due to its practical importance. The articles \cite{GLU,GLU2,Leo,PenSchSmi} pioneer the study on invisibility cloaking by metamaterials. The crucial idea is to coat a target object with a layer of artificially engineered material with desired optical properties so that the electromagnetic waves pass through the device without creating any shadow at the other end; namely, invisibility cloaking is achieved. There are many subsequent developments on various invisibility cloaking schemes and we refer to \cite{Ammari2,Ammari3,Ammari4,BL,BLZ,CC,GKLU4,GKLU5,HuLiu,KOVW,KSVW,LiLiuRonUhl,Liu,LiuSun,Nor,U2} and the references therein. All of the aforementioned works are concerned with the design of certain artificial mechanisms of controlling wave propagation in order to achieve the invisibility effect. Most of the invisibility cloaking results are independent of the source of the detecting waves; that is, for any generic wave fields that one uses to impinge on the cloaking device, there will be invisibility effect produced. We are also aware of the cloaking scheme in \cite{LiLiuRonUhl} where the invisibility depends on the incident angles of the impinging wave fields. 

In this paper, we are curious about the invisibility without the metamaterial coating; that is, whether or not invisibility be achieved for a regular/natural scattering object. It turns out that invisibility can still be nearly achieved for a large class of special impinging wave fields, depending on the underlying scattering object. Our study is motivated by a recent article \cite{BPS} where the authors consider the so-called non-scattering energy for the Schr\"odinger equation. It is shown that for a generic potential supported in a corner, there does not exist non-scattering energy; that is, for any incident wave the corresponding scattered wave cannot vanish outside the potential. Indeed, the only known case that there exist non-scattering energies is for radially symmetric potentials. Hence, it seems unobjectionable to conjecture that for a generic potential, there does not exist non-scattering energy. However, our study in this paper shall indicate that for any potential satisfying a certain generic condition, there always exist nearly non-scattering energies. Motivated by some practical applications, we shall conduct our study for the electromagnetic wave scattering in a rather different, but more general context. It is shown that in three different scenarios, for a certain inhomogeneous scattering object, there exists an infinite discrete set of ``energies", namely wavenumbers, so that the corresponding scattered wave fields are nearly vanishing due to certain incident wave fields. Hence, for those incident wave fields, near-invisibility is achieved. For the first two scenarios in our study,  we consider the case that the inhomogeneous medium is coated with a layer of properly chosen conducting medium, then the wave set is generated from the Maxwell-Herglotz approximation to the interior PEC or PMC eigenfunctions and depends only on the shape of the inhomogeneous medium. If no such a conducting coating is used, then the wave set is generated from the Maxwell-Herglotz approximation to the generalised interior transmission eigenfunctions and depends on both the content and the shape of the inhomogeneous medium. We characterise the nearly non-scattering wave sets in all of the three cases with sharp estimates. As an interesting application, the results can be used to give a conceptual design of a novel shadowless lamp. We shall briefly discuss the conceptual design of such a novel shadowless lamp in the following. Before that, we would like to note that the construction of ``non-scattering" wavenumbers with respect to a discrete set of measurement data in acoustic scattering governed by the Helmholtz equation was recently explored in \cite{BCN}; and the non-existence of ``non-scattering" wavenumbers was used to establish the uniqueness in determining an inhomogeneous acoustic medium scatterer in \cite{HSV}. Those studies are closely related to our present one. 

The shadowless lamp is an important medical device to provide illumination for surgical operations and medical examination. The lamp is fit for lighting needs in hospitals and the illumination can be adjusted according to practical requirements. It adopts light sources/reflectors from different positions to reduce shadows produced by different parts of the medical workers; see Fig.~\ref{fig:1} for an illustration.  
\begin{figure}[h]
\begin{center}
  \includegraphics[width=2in]{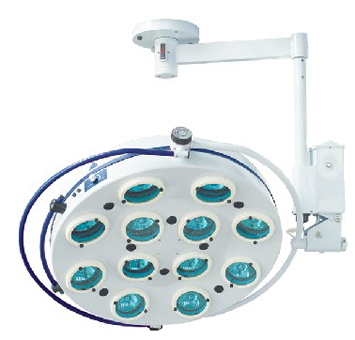}
  \end{center}
  \caption{Shadowless Lamp; www.amismed.com/pro$\empty_{-}$info.asp?pro$\empty_{-}$id=99  \label{fig:1}}
\end{figure}

Based on our study of the non-scattering electromagnetic wave set, we can readily propose a mathematical design of a novel shadowless lamp. The crucial idea is to choose the illumination sources of the lamp from a certain special set, depending on the surgeons performing the surgical operations under the lamp. Based on such a design, there would be nearly no shadow produced. The special set of illumination sources consists of the electromagnetic spectrum from low frequencies to high frequencies, and can be adjusted catering to the practical scenarios. If the surgeons wear certain coats made of suitably chosen conducting materials, then the design of the illumination sources depends only on the shapes of the surgeons; whereas if no such coats will be wore then the design of the sources depends on both the shapes and the optical properties of the bodies of the surgeons.   

\subsection{Mathematical formulation}

We consider the time-harmonic electromagnetic (EM) wave scattering in a homogeneous space with the presence of an inhomogeneous scatterer. Let us first characterise the optical properties of an EM medium with the electric permittivity $\epsilon$, magnetic permeability $\mu$ and electric conductivity $\sigma$. We recall that $\mathbb{M}_{sym}^{3\times 3}$ is the space of real-valued $3\times 3$ symmetric matrices and that, for any open set $D\subset\mathbb{R}^3$, we say that $\gamma$ is a tensor in $D$ satisfying the uniform ellipticity condition if $\gamma\in L^\infty(D; \mathbb{M}_{sym}^{3\times 3})$ and there exists $0<c_0<1$ such that
\begin{equation}\label{eq:elliptic}
c_0\|\xi\|^2\leq \gamma(\mathbf{x})\xi\cdot\xi\leq c_0^{-1}\|\xi\|^2\quad \mbox{for a.e. $\mathbf{x}\in D$ and every $\xi\in\mathbb{R}^3$}.
\end{equation}
$c_0$ shall be referred to as the {\it ellipticity constant} of the tensor $\gamma$. Moreover, $\gamma(\mathbf{x})$, $\mathbf{x}\in D$, is said to be {\it isotropic} if there exists $\alpha(x)\in L^\infty(D; \mathbb{R})$ such that $\gamma(\mathbf{x})=\alpha(\mathbf{x})\cdot\mathbf{I}_{3\times 3}$, where $\mathbf{I}_{3\times 3}$ signifies the $3\times 3$ identity matrix. It is assumed that both $\epsilon(\mathbf{x})$ and $\mu(\mathbf{x})$, $x\in\mathbb{R}^3$, belong to $L^\infty(\mathbb{R}^3;\mathbb{M}_{sym}^{3\times 3})$, and are uniform elliptic with constant $c_0\in\mathbb{R}_+$; whereas it is also assumed that $\sigma\in L^\infty(\mathbb{R}^3;\mathbb{M}_{sym}^{3\times 3})$ satisfying  
\begin{equation}\label{eq:elliptic2}
0\leq \sigma(\mathbf{x})\xi\cdot\xi\leq \lambda_0\|\xi\|^2\quad \mbox{for a.e. $\mathbf{x}\in \mathbb{R}^3$ and every $\xi\in\mathbb{R}^3$},
\end{equation}
where $\lambda_0\in\mathbb{R}_+$. Furthermore, we assume that there is an open bounded set $\Sigma$ with a Lipschitz boundary $\partial\Sigma$ and a connected complement $\Sigma^c:=\mathbb{R}^3\backslash\overline{\Sigma}$ such that 
\begin{equation*}
\epsilon(\mathbf{x})=\epsilon_\infty\cdot\mathbf{I}_{3\times 3};\quad \mu(\mathbf{x})=\mu_\infty\cdot\mathbf{I}_{3\times 3};\quad\sigma(\mathbf{x})=0\cdot\mathbf{I}_{3\times 3}\quad\mbox{for}\ \ \mathbf{x}\in\Sigma^c,
\end{equation*}
where $\epsilon_\infty$ and $\mu_\infty$ are two positive constants. $\epsilon_\infty$ and $\mu_\infty$ characterise the permittivity and permeability of the isotropic homogeneous matrix $\Sigma^c$ that contains the inhomogeneous scatterer $(\Sigma; \epsilon, \mu, \sigma)$. 

Let $\omega\in\mathbb{R}_+$ denote an electromagnetic (EM) wavenumber, corresponding to a certain EM spectrum. Consider the EM radiation in this frequency regime in the space $(\mathbb{R}^3;\epsilon,\mu,\sigma)$ described above. Let $(\mathbf{E}^{i,\omega}, \mathbf{H}^{i,\omega})$ be a pair of entire electric and magnetic fields, modelling the illumination source. They verify the time-harmonic Maxwell equations,
\begin{equation}\label{eq:maxwell}
\nabla\times\mathbf{E}-\mathrm{i}\omega\mu_\infty\mathbf{H}=0,\quad\nabla\times\mathbf{H}+\mathrm{i}\omega\epsilon_\infty\mathbf{E}=0\quad\mbox{in}\ \ \mathbb{R}^3. 
\end{equation} 
The presence of the inhomogeneous scatterer $(\Sigma;\epsilon,\mu,\sigma)$ interrupts the propagation of the EM waves $\mathbf{E}^{i,\omega}$ and $\mathbf{H}^{i,\omega}$, leading to the so-called wave scattering. We let $\mathbf{E}^{s,\omega}$ and $\mathbf{H}^{s,\omega}$ denote, respectively, the scattered electric and magnetic fields. Define
\begin{equation}\label{eq:total}
\mathbf{E}^\omega=:\mathbf{E}^{i,\omega}+\mathbf{E}^{s,\omega},\quad \mathbf{H}^\omega=:\mathbf{H}^{i,\omega}+\mathbf{H}^{s,\omega},
\end{equation}       
to be the total electric and magnetic fields, respectively. Then the EM scattering is governed by the following Maxwell system
\begin{equation}\label{eq:maxwell2}
\begin{cases}
& \nabla\times\mathbf{E}^\omega(\mathbf{x})-\mathrm{i}\omega\mu(\mathbf{x})\mathbf{H}^\omega(\mathbf{x})=0,\hspace*{2.3cm} \mathbf{x}\in \mathbb{R}^3,\medskip\\
&\nabla\times\mathbf{H}^\omega(\mathbf{x})+\mathrm{i}\omega\epsilon(\mathbf{x})\mathbf{E}^\omega(x)=\sigma(\mathbf{x})\mathbf{E}^\omega(\mathbf{x}),\quad \ \ \mathbf{x}\in \mathbb{R}^3,\medskip\\
&\displaystyle{\lim_{\|{\mathbf{x}}\|\rightarrow+\infty}\big(\mu_\infty^{1/2}\mathbf{H}^{s,\omega}(\mathbf{x})\times \mathbf{x}-\|{\mathbf{x}}\| \epsilon_\infty^{1/2}\mathbf{E}^{s,\omega}(\mathbf{x})\big)=0. }
\end{cases}
\end{equation}
The last limit in \eqref{eq:maxwell2} is known as the Silver-M\"uller radiation condition. The Maxwell system \eqref{eq:maxwell2} is well posed and there exists a unique pair of solutions $(\mathbf{E}^\omega, \mathbf{H}^\omega)\in H_{loc}^2(\mbox{curl}, \mathbb{R}^3)$ (cf. \cite{LRX,Mon,PWW}). Here and also in what follows, we make use of the following Sobolev spaces that for any open set $G\subset\mathbb{R}^3$, 
\[
\begin{split}
H_{loc}(\mathrm{curl},G)& =\{\mathbf{u}\in L^2_{loc}(G;\mathbb{C}^3):\ \nabla\times \mathbf{u}\in L^2_{loc}(G;\mathbb{C}^3)\},\\
H^2_{loc}(\mathrm{curl},G)& =\{\mathbf{u}\in H_{loc}(\mathrm{curl}, G):\ \nabla\times \mathbf{u}\in H_{loc}(\mathrm{curl}, G)\},
\end{split}
\]
Let $\kappa_\omega:=\sqrt{\mu_\infty\epsilon_\infty}\omega$ signify the wavenumber. For the solutions to \eqref{eq:maxwell2}, we have that as $\|\mathbf{x}\|\rightarrow+\infty$ (cf. \cite{CK,Ned}),
\begin{equation*}
\begin{split}
\mathbf{E}^\omega(\mathbf{x})=&\frac{e^{\mathrm{i}\kappa_\omega\|\mathbf{x}\|}}{\|\mathbf{x}\|}\mathbf{E}_\infty^\omega(\hat{\mathbf{x}})+\mathcal{O}\left(\frac{1}{\|\mathbf{x}\|^2}\right),\\
\mathbf{H}^\omega(\mathbf{x})=&\frac{e^{\mathrm{i}\kappa_\omega\|\mathbf{x}\|}}{\|\mathbf{x}\|}\mathbf{H}_\infty^\omega(\hat{\mathbf{x}})+\mathcal{O}\left(\frac{1}{\|\mathbf{x}\|^2}\right),
\end{split}
\end{equation*}
where $\hat{\mathbf{x}}:=\mathbf{x}/\|\mathbf{x}\|\in\mathbb{S}^2$, $\mathbf{x}\in\mathbb{R}^3\backslash\{0\}$. 
$\mathbf{E}_\infty^\omega$ and $\mathbf{H}_\omega^\infty$ are, respectively, referred to as the electric and magnetic far-field patterns, and they satisfy for any $\hat{\mathbf{x}}\in\mathbb{S}^2$,
\begin{equation*}
\mathbf{H}^\omega_\infty(\hat{\mathbf{x}})=\hat{\mathbf{x}}\times \mathbf{E}^\omega_\infty(\hat{\mathbf{x}})\quad \mbox{and}\quad \hat{\mathbf{x}}\cdot \mathbf{E}^\omega_\infty(\hat{\mathbf{x}})=\hat{\mathbf{x}}\cdot \mathbf{H}^\omega_\infty(\hat{\mathbf{x}})=0.
\end{equation*}

In terms of the shadowless lamp setting, $(\Sigma;\epsilon,\mu,\sigma)$ signifies the bodies of the surgeons, whereas $\mathbf{E}^\omega_\infty$ and $\mathbf{H}^\omega_\infty$ account for the production of the shadows. It is emphasised that $\Sigma$ is not necessarily simply connected, and hence we allow the presence of multiple surgeons under the lamp. The major contribution in this paper is to provide a deterministic and constructive way, for a given inhomogeneous scatterer $(\Sigma;\epsilon,\mu,\sigma)$, in deriving a set of entire EM waves $\mathcal{W}$, such that for any $(\mathbf{E}^{i,\omega}, \mathbf{H}^{i,\omega})\in \mathcal{W}$, the corresponding $\mathbf{E}_\infty^\omega$ and $\mathbf{H}_\infty^\omega$ are nearly vanishing. From the practical motivation, we shall consider three scenarios in our study. The first two cases are to coat the inhomogeneous scatterer with certain properly designed layers of conducting mediums. Then the set $\mathcal{W}$ is generated from the Maxwell-Herglotz approximation to the so-called interior PEC or PMC eigenfunctions, and it depends only on the shape of the scatterer. The third case is without such a conducting coating, then the set $\mathcal{W}$ is generated from the Maxwell-Herglotz approximation to certain generalised transmission eigenfunctions, and it depends on both the shape and the content of the scatterer. 

The rest of the paper is organised as follows. In Section 2, we present the major results on nearly non-scattering EM wave fields. Section 3 is devoted to the proofs of the theorems in Section 2.

\section{Nearly non-scattering wave sets}

We first present some preliminary results, including the interior eigenvalue problems and the approximation by Maxwell-Herglotz fields. Let $\Omega$ be a bounded open set in $\mathbb{R}^3$ with a Lipschitz boundary $\partial\Omega$ and a connected complement $\Omega^c$. We begin with the following PEC eigenvalue problem,
\begin{equation}\label{eq:peceigen}
\begin{cases}
& \nabla\times\mathbf{E}^\omega(\mathbf{x})-\mathrm{i}\omega\mu_\infty\mathbf{H}^\omega(\mathbf{x})=0,\quad \ \ \mathbf{x}\in \Omega,\medskip\\
&\nabla\times\mathbf{H}^\omega(\mathbf{x})+\mathrm{i}\omega\epsilon_\infty\mathbf{E}^\omega(\mathbf{x})=0,\quad \ \, \ \mathbf{x}\in \Omega,\medskip\\
&\displaystyle{\bm{\nu}\times \mathbf{E}^\omega(\mathbf{x})=0,\quad \mathbf{x}\in\partial\Omega.  }
\end{cases}
\end{equation}
If there exists a pair of nontrivial solutions $(\mathbf{E}^\omega,\mathbf{H}^\omega)$ to \eqref{eq:peceigen}, then $\omega\in\mathbb{R}_+$ is called a {\it PEC eigenvalue} associated with $(\Omega;\epsilon_\infty,\mu_\infty)$, and $(\mathbf{E}^\omega,\mathbf{H}^\omega)$ is referred to as the corresponding pair of eigenfunctions. Indeed, all the eigenfunctions $\mathbf{E}^\omega$ (resp. $\mathbf{H}^\omega$) associated with the eigenvalue $\omega$ form a vector space, and we shall denote by $\mathbf{X}_\omega$ in what follows. Introducing the Sobolev space  
\begin{equation*}
\mathbf{X}=\{\mathbf{u}\in H(\mathrm{curl}, \Omega);\ \bm{\nu}\times\mathbf{u}=0\ \mbox{on}\ \ \partial\Omega\ \ \mbox{and}\ \ \nabla\cdot\mathbf{u}=0\ \mbox{in}\ \ \Omega \},
\end{equation*}
then we have the following result (cf. \cite{Mon}).
\begin{thm}\label{thm:peceigen}
There exists an infinite discrete set of eigenvalues $\omega_j\in\mathbb{R}_+,\ j=1,2,3,\cdots$ to \eqref{eq:peceigen} and the corresponding eigenfunctions $\mathbf{E}^{\omega_j}\in \mathbf{X}$ such that
\begin{enumerate}
\item The Maxwell system \eqref{eq:peceigen} holds for each pair $(\mathbf{E}^{\omega_j}, \mathbf{H}^{\omega_j})$ with $\mathbf{H}^{\omega_j}:=(\mathrm{i}\omega_j\mu_\infty)^{-1}$ $\nabla\times\mathbf{E}^{\omega_j}$;\smallskip

\item $\omega_1\leq\omega_2\leq\omega_3\leq\cdots$ and $\lim_{j\rightarrow+\infty}\omega_j=+\infty$;\smallskip

\item $\{\mathbf{E}^{\omega_j}\}^\infty_{j=1}$ are orthonormal with respect to the $L^2$ inner product;\smallskip

\item $\mathbf{X}=span\{\mathbf{E}^{\omega_1},\mathbf{E}^{\omega_2},\cdots\}$;\smallskip

\item The eigen-space $\mathbf{X}_{\omega_j}$ is finite dimensional, $j=1,2,\ldots$. 
\end{enumerate}
\end{thm}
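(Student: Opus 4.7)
The plan is to reduce \eqref{eq:peceigen} to a second-order eigenvalue problem for $\mathbf{E}^\omega$ alone and then apply the spectral theorem for compact self-adjoint operators on a Hilbert space. Eliminating the magnetic field via $\mathbf{H}^\omega = (\mathrm{i}\omega\mu_\infty)^{-1}\nabla\times\mathbf{E}^\omega$, the system becomes
\begin{equation*}
\nabla\times\nabla\times\mathbf{E}^\omega = \lambda\,\mathbf{E}^\omega\ \ \text{in } \Omega,\qquad \bm{\nu}\times\mathbf{E}^\omega=0\ \text{on } \partial\Omega,
\end{equation*}
with $\lambda:=\omega^2\epsilon_\infty\mu_\infty$. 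Taking divergence on both sides shows that every eigenfunction with $\lambda\neq 0$ automatically satisfies $\nabla\cdot\mathbf{E}^\omega=0$, which is exactly the divergence-free constraint built into $\mathbf{X}$; thus restricting attention to $\mathbf{X}$ loses no nontrivial mode.

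Next I would set up the weak formulation: find $\mathbf{E}\in\mathbf{X}$ such that
\begin{equation*}
a(\mathbf{E},\mathbf{v}):=\int_\Omega (\nabla\times\mathbf{E})\cdot(\nabla\times\overline{\mathbf{v}})\,d\mathbf{x} = \lambda\int_\Omega \mathbf{E}\cdot\overline{\mathbf{v}}\,d\mathbf{x},\quad \forall\,\mathbf{v}\in\mathbf{X}.
\end{equation*}
The shifted form $b(\mathbf{u},\mathbf{v}):=a(\mathbf{u},\mathbf{v})+(\mathbf{u},\mathbf{v})_{L^2(\Omega)}$ is Hermitian, continuous, and coercive on $\mathbf{X}$ in the $H(\mathrm{curl},\Omega)$-graph norm, so by Lax--Milgram it defines a bounded solution operator $T:L^2_{\mathrm{div}\,0}(\Omega;\mathbb{C}^3)\to\mathbf{X}$ via $b(Tf,\mathbf{v})=(f,\mathbf{v})_{L^2}$.

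The main obstacle, and the only non-formal ingredient, is the Weber--Weck--Picard compact embedding: on a bounded Lipschitz domain $\Omega$ with connected complement, $\mathbf{X}\hookrightarrow L^2(\Omega;\mathbb{C}^3)$ is compact. This is where the combined tangential-trace and divergence-free hypotheses in the definition of $\mathbf{X}$ are essential, and it is the place where a Lipschitz-regularity argument must be invoked (rather than a naive application of Rellich for $H^1$). Assuming this result, the composition of $T$ with the compact embedding gives a compact, self-adjoint, positive operator $K$ on $L^2_{\mathrm{div}\,0}(\Omega;\mathbb{C}^3)$.

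Finally, the Hilbert--Schmidt spectral theorem applied to $K$ yields a countable orthonormal basis of eigenfunctions $\{\mathbf{E}^{\omega_j}\}_{j=1}^\infty$ with positive eigenvalues $\mu_j\downarrow 0$, so the unshifted eigenvalues $\lambda_j=\mu_j^{-1}-1$ form a nondecreasing sequence tending to $+\infty$, giving PEC frequencies $\omega_j=\sqrt{\lambda_j/(\epsilon_\infty\mu_\infty)}$ with the claimed monotonicity in (2). Items (1) and (3) are immediate from the construction, item (5) follows from compactness of $K$, and item (4) comes from the fact that the orthonormal eigenbasis of a compact self-adjoint operator on $L^2_{\mathrm{div}\,0}$ spans that space and, via $T$, also spans $\mathbf{X}$.
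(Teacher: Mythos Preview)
The paper does not supply its own proof of this theorem: it is stated with the parenthetical ``(cf.\ \cite{Mon})'' and then used as a black box. Your outline is precisely the standard argument one finds in Monk's book---reduce to the curl--curl eigenproblem, pass to the weak form on $\mathbf{X}$, invoke the Weber--Weck--Picard compact embedding $\mathbf{X}\hookrightarrow L^2$, and apply the spectral theorem to the resulting compact self-adjoint solution operator---so in that sense your approach and the paper's (cited) approach coincide.

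One small point worth tightening: from $\lambda_j=\mu_j^{-1}-1$ you only get $\lambda_j\ge 0$, and $\lambda_j=0$ occurs exactly when $\nabla\times\mathbf{E}=0$, $\nabla\cdot\mathbf{E}=0$, $\bm{\nu}\times\mathbf{E}=0$ has a nontrivial solution. For simply connected $\Omega$ this kernel is trivial and all $\omega_j$ are strictly positive as claimed; for general Lipschitz $\Omega$ with connected complement (e.g.\ a solid torus) the kernel is the finite-dimensional space of Dirichlet harmonic fields, which must be split off before one obtains $\omega_j\in\mathbb{R}_+$. This is handled in the reference, but your write-up glosses over it.
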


Next, we consider the approximation by Maxwell-Herglotz wave fields. 
\begin{defn}
A Maxwell-Herglotz pair is a pair of vector fields of the form
\begin{equation}\label{eq:mh}
\mathbf{E}_{\mathbf{a}}^{\kappa_\omega}(\mathbf{x})=\epsilon_\infty^{-1/2}\int_{\mathbb{S}^2}e^{\mathrm{i}\kappa_\omega\mathbf{x}\cdot \mathbf{d}}\mathbf{a}(\mathbf{d})\,ds(\mathbf{d}),\ \mathbf{H}^{\kappa_\omega}_{\mathbf{a}}(\mathbf{x})=\frac{1}{\mathrm{i}\omega\mu_\infty}\nabla\times\mathbf{E}_{\mathbf{a}}^{\kappa_\omega}(\mathbf{x}),\ \mathbf{x}\in\mathbb{R}^3,
\end{equation}
with 
\[
\mathbf{a}\in TL^2(\mathbb{S}^2):=\{\mathbf{a}\in L^2(\mathbb{S}^2;\mathbb{C}^3); \ \bm{\nu}\cdot\mathbf{a}=0\},
\]
where $\bm{\nu}$ signifies the exterior unit normal vector to $\mathbb{S}^2$. 
\end{defn}

It is straightforward to verify that $(\mathbf{E}_{\mathbf{a}}^{\kappa_\omega},\mathbf{H}_{\mathbf{a}}^{\kappa_\omega})$ is a pair of entire solutions to the Maxwell equations \eqref{eq:maxwell}. There holds,
\begin{thm}[Theorems 2 and 4 in \cite{Wec}]\label{thm:main1}
Suppose that $\Omega$ is a domain of class $C^2$. Then the Maxwell-Herglotz fields are dense in the space of all solutions $(\mathbf{E},\mathbf{H})$ to \eqref{eq:maxwell} which belong to $H^1(\Omega)\times H^1(\Omega)$ in the topology induced by the following norm 
\[
\interleave(\mathbf{E},\mathbf{H})\interleave_1:=\|\mathbf{E}\|_{H^1(\Omega;\mathbb{C}^3)}+\|\mathbf{H}\|_{H^1(\Omega;\mathbb{C}^3)}. 
\]

Suppose that $\Omega$ is a domain of class $C^{0,1}$ and $\omega$ is not a PEC eigenvalue associated with $(\Omega;\epsilon_\infty,\mu_\infty)$. Then the Maxwell-Herglotz fields are dense in the space of all solutions $(\mathbf{E},\mathbf{H})$ to \eqref{eq:maxwell} which belong to $H(\mathrm{curl},\Omega)\times H(\mathrm{curl},\Omega)$ in the topology induced by the following norm 
\[
\interleave (\mathbf{E},\mathbf{H}) \interleave_2:=\|\mathbf{E}\|_{H(\mathrm{curl},\Omega)}+\|\mathbf{H}\|_{H(\mathrm{curl}, \Omega)}. 
\]

\end{thm}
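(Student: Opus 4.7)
The plan is to prove both density statements by a Hahn--Banach duality argument, paralleling Weck's original approach. Suppose $\ell$ is a continuous linear functional on the solution space of \eqref{eq:maxwell} (topologized either by $\interleave\cdot\interleave_1$ or $\interleave\cdot\interleave_2$) which annihilates every Maxwell--Herglotz pair $(\mathbf{E}_\mathbf{a}^{\kappa_\omega},\mathbf{H}_\mathbf{a}^{\kappa_\omega})$ with $\mathbf{a}\in TL^2(\mathbb{S}^2)$. It suffices to deduce $\ell\equiv 0$, since the density claim then follows from Hahn--Banach.

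First I would represent $\ell$ via duality. For the $H^1\times H^1$ case (under the $C^2$ hypothesis), Riesz-type identification yields a representing pair of distributions of order $-1$ supported in $\overline{\Omega}$; for the $H(\mathrm{curl})\times H(\mathrm{curl})$ case (Lipschitz boundary), the representation lives in the tangential-trace duality $H^{-1/2}(\mathrm{div}_{\partial\Omega},\partial\Omega)$. Against these data I construct an auxiliary radiating Maxwell pair $(\tilde{\mathbf{E}},\tilde{\mathbf{H}})$ in $\mathbb{R}^3\setminus\overline{\Omega}$, built from volume and layer potentials with the Maxwell fundamental solution at wavenumber $\kappa_\omega$. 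Using the plane-wave expansion of the fundamental solution together with reciprocity, the annihilation condition $\ell(\mathbf{E}_\mathbf{a}^{\kappa_\omega},\mathbf{H}_\mathbf{a}^{\kappa_\omega})=0$ for every $\mathbf{a}\in TL^2(\mathbb{S}^2)$ translates exactly into the vanishing of the electric (and hence magnetic) far-field pattern of $(\tilde{\mathbf{E}},\tilde{\mathbf{H}})$. Rellich's lemma for the Silver--M\"uller radiation condition then forces $\tilde{\mathbf{E}}=\tilde{\mathbf{H}}\equiv 0$ throughout $\Omega^c$.

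The final step is to exploit jump relations across $\partial\Omega$ to recover the representing densities from the vanishing of $(\tilde{\mathbf{E}},\tilde{\mathbf{H}})$ on the exterior side. In the smooth case the classical jump identities read the densities off directly, yielding $\ell=0$. In the Lipschitz, $H(\mathrm{curl})$ setting the jumps only permit the conclusion that the interior trace $\tilde{\mathbf{E}}|_\Omega$ solves the homogeneous PEC problem \eqref{eq:peceigen}; this is precisely where the assumption that $\omega$ is not a PEC eigenvalue enters, forcing $\tilde{\mathbf{E}}|_\Omega=0$ by Theorem \ref{thm:peceigen}, and therefore the densities and $\ell$ itself vanish. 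The main obstacle, as is typical in Maxwell potential theory, is establishing the boundedness and trace identities for the Maxwell single/double-layer potentials at the critical regularity on a merely Lipschitz surface; once the Verchota--Mitrea-type mapping properties are in hand, the Rellich-plus-PEC-eigenvalue dichotomy closes the argument cleanly.
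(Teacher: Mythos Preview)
The paper does not supply a proof of this theorem at all: it is quoted verbatim as ``Theorems~2 and~4 in \cite{Wec}'' and used as a black box in the construction of the $\varepsilon$-nets $\mathcal{H}_\varepsilon(\mathbf{Y})$, $\mathcal{H}_\varepsilon(\mathbf{Z})$, $\mathcal{H}_\varepsilon(\mathbf{T}_\omega)$. There is therefore no ``paper's own proof'' to compare against.

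That said, your sketch is a faithful outline of the standard Hahn--Banach/Rellich argument that underlies Weck's original proof: annihilate all Herglotz pairs, represent the functional by potential-theoretic data, read off vanishing far field, invoke Rellich to kill the exterior radiating field, and use jump relations (plus the PEC non-eigenvalue assumption in the Lipschitz case) to conclude that the representing data vanish. The identification of where the PEC hypothesis enters is correct. The only part of your outline that is genuinely delicate is, as you note yourself, the mapping and trace properties of the Maxwell layer potentials on Lipschitz surfaces at the $H(\mathrm{curl})$ level; this is handled in Weck's paper and the supporting literature (e.g.\ \cite{Alo,BCS}) rather than by anything in the present article.
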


Starting from now on, we assume that the domain $\Omega$ introduced in \eqref{eq:peceigen} has a $C^2$-smooth boundary $\partial\Omega$. We define
\begin{equation*}
\mathbf{Y}=\{(\mathbf{u},\mathbf{v})\in L^2(\Omega;\mathbb{C}^3)^2;\ \mathbf{u}\in\mathbf{X},\ \mathbf{v}=(\mathrm{i}\omega\mu_\infty)^{-1}\nabla\times\mathbf{u}\}
\end{equation*} 
to be the vector space consisting of all of the eigen-pairs to \eqref{eq:peceigen}. By the standard regularity estimate (cf. \cite{Mon,Ned}), we know that for any $(\mathbf{u},\mathbf{v})\in \mathbf{Y}$, $(\mathbf{u},\mathbf{v})\in H^1(\Omega)\times H^1(\Omega)$. By Theorem~\ref{thm:main1}, for any $\varepsilon>0$, we let $\mathcal{H}_\varepsilon(\mathbf{Y})$ denote an $\varepsilon$-net of the set $\mathbf{Y}$ in the space $H^1(\Omega)\times H^1(\Omega)$ in the following; that is, for any $(\mathbf{u},\mathbf{v})\in\mathbf{Y}$ , there exists $(\widetilde{\mathbf{u}},\widetilde{\mathbf{v}}) \in \mathcal{H}_\varepsilon(\mathbf{Y})$, being a Maxwell-Herglotz pair of the form \eqref{eq:mh}, such that 
\begin{equation}\label{eq:net1}
\interleave (\widetilde{\mathbf{u}},\widetilde{\mathbf{v}})-(\mathbf{u},\mathbf{v}) \interleave_1\leq \varepsilon. 
\end{equation}

We are in a position to present our first major result on the nearly non-scattering wave set. 

\begin{thm}\label{thm:nonscattering1}
Let $\Sigma$ be a bounded Lipschitz domain and $\Omega$ be a bounded $C^2$ domain such that $\Sigma\Subset\Omega$ and $\Omega^c$ is connected. Consider an EM medium distribution as follows,
\begin{equation}\label{eq:emp1}
\mathbb{R}^3;\epsilon,\mu,\sigma=\begin{cases}
\Sigma;\ \ \epsilon_b,\mu_b,\sigma_b,\\
\Omega\backslash\overline{\Sigma};\ \ \tau^{-1}\epsilon_c, \tau \mu_c, \tau^{-1}\sigma_c,\\
\mathbb{R}^3\backslash\overline{\Omega};\ \ \epsilon_\infty\cdot\mathbf{I}_{3\times 3}, \mu_\infty\cdot\mathbf{I}_{3\times 3}, 0\cdot\mathbf{I}_{3\times 3},
\end{cases}
\end{equation}
where $\epsilon_b, \mu_b$ and $\sigma_b$ satisfy \eqref{eq:elliptic} and \eqref{eq:elliptic2}, respectively; and $\epsilon_c, \mu_c$ and $\sigma_c$ are uniformly elliptic tensors with constant $\alpha_0\in\mathbb{R}_+$, and moreover it is assumed that $\mu_c=\beta_0\cdot\mathbf{I}_{3\times 3}$ with $\beta_0\in\mathbb{R}_+$. Consider the electromagnetic scattering problem \eqref{eq:maxwell2} associated with the EM medium $(\mathbb{R}^3;\epsilon,\mu,\sigma)$ described above, and a pair of incident fields $(\mathbf{E}^{i,\omega},\mathbf{H}^{i,\omega})=(\mathbf{E}_{\mathbf{a}}^{\kappa_\omega},\mathbf{H}_{\mathbf{a}}^{\kappa_\omega})\in\mathcal{H}_\varepsilon(\mathbf{Y})$. Then for sufficiently small $\tau$ and $\varepsilon$, we have
\begin{equation}\label{eq:est1}
\|\mathbf{E}_\infty^\omega\|_{L^2(\mathbb{S}^2; \mathbb{C}^3)}\leq C\Big( \tau^{1/2}\big\|\mathbf{E}_{\mathbf{a}}^{\kappa_\omega} \big\|_{H(\mathrm{curl}, \Omega)}+\tau^{1/2}\big\|\mathbf{H}_{\mathbf{a}}^{\kappa_\omega} \big\|_{H(\mathrm{curl}, \Omega)}+\varepsilon \Big),
\end{equation}
where $C$ depends only on $\omega, \alpha_0, \epsilon_\infty, \mu_\infty$ and $\Omega, \Sigma$, but independent of $\epsilon_b, \mu_b$ and $\sigma_b$. 

\end{thm}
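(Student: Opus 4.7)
The plan rests on two complementary observations. First, since $(\mathbf{E}^{i,\omega},\mathbf{H}^{i,\omega})\in\mathcal{H}_\varepsilon(\mathbf{Y})$ approximates an exact interior PEC eigenpair $(\mathbf{u},\mathbf{v})\in\mathbf{Y}$ in the $H^1(\Omega)\times H^1(\Omega)$ norm, and $\bm{\nu}\times\mathbf{u}=0$ on $\partial\Omega$ by \eqref{eq:peceigen}, the trace theorem gives
\[
\|\bm{\nu}\times\mathbf{E}^{i,\omega}\|_{H^{-1/2}(\partial\Omega)}=\|\bm{\nu}\times(\mathbf{E}^{i,\omega}-\mathbf{u})\|_{H^{-1/2}(\partial\Omega)}\leq C\varepsilon.
\]
Second, as $\tau\to 0^+$ the coating layer $\Omega\setminus\overline{\Sigma}$ has conductivity $\tau^{-1}\sigma_c\to\infty$, which should force the tangential trace of the total electric field to vanish on $\partial\Omega$ at the skin-depth rate $\tau^{1/2}$. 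Together these mean that the scattered field in $\Omega^c$ solves the homogeneous exterior Maxwell system with Silver--M\"uller radiation and tangential trace on $\partial\Omega$ of order $O(\tau^{1/2}+\varepsilon)$, and continuous dependence of the far field on the boundary data for the exterior PEC problem yields \eqref{eq:est1}.

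\textbf{Key steps.} Step~(i): derive a uniform-in-$\tau$ a priori $H(\mathrm{curl})$ estimate for $(\mathbf{E}^\omega,\mathbf{H}^\omega)$ in a neighbourhood of $\partial\Omega$, with constant depending only on $(\omega,\alpha_0,\epsilon_\infty,\mu_\infty,\Omega,\Sigma)$ and the incident norms. The natural device is to rescale $\widetilde{\mathbf{H}}=\tau\mathbf{H}^\omega$ inside the coating so that the singular Maxwell pair becomes the regular system $\nabla\times\mathbf{E}^\omega=\mathrm{i}\omega\mu_c\widetilde{\mathbf{H}}$, $\nabla\times\widetilde{\mathbf{H}}=(\sigma_c-\mathrm{i}\omega\epsilon_c)\mathbf{E}^\omega$, coupled to the unchanged interior system across $\partial\Sigma$ and the unchanged exterior system across $\partial\Omega$; uniform well-posedness is then obtained by a Fredholm/contradiction argument in which the limit $\tau=0$ decouples the interior and forces uniqueness via the exterior PEC problem. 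Step~(ii): apply the integration-by-parts identity on $\Omega\setminus\overline{\Sigma}$ with the coating equations and take real parts to obtain
\[
\tau^{-1}\int_{\Omega\setminus\overline{\Sigma}}\sigma_c\mathbf{E}^\omega\cdot\overline{\mathbf{E}^\omega}\,dV=-\,\Re\int_{\partial(\Omega\setminus\overline{\Sigma})}(\bm{\nu}\times\mathbf{E}^\omega)\cdot\overline{\mathbf{H}^\omega}\,dS.
\]
Ellipticity of $\sigma_c$ combined with the step~(i) bounds on the $\partial\Omega$ and $\partial\Sigma$ traces, taken entirely from the coating side so that no norm on $\Sigma$ appears (this being what yields independence from $\epsilon_b,\mu_b,\sigma_b$), produces
\[
\|\mathbf{E}^\omega\|_{L^2(\Omega\setminus\overline{\Sigma})}\leq C\tau^{1/2}\bigl(\|\mathbf{E}_{\mathbf{a}}^{\kappa_\omega}\|_{H(\mathrm{curl},\Omega)}+\|\mathbf{H}_{\mathbf{a}}^{\kappa_\omega}\|_{H(\mathrm{curl},\Omega)}\bigr).
\]
Feeding this back into $\nabla\times\mathbf{E}^\omega=\mathrm{i}\omega\tau\mu_c\mathbf{H}^\omega$ gives the same bound in $H(\mathrm{curl},\Omega\setminus\overline{\Sigma})$, hence on $\|\bm{\nu}\times\mathbf{E}^\omega\|_{H^{-1/2}(\partial\Omega)}$. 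Step~(iii): write $\bm{\nu}\times\mathbf{E}^{s,\omega}|_{\partial\Omega}=\bm{\nu}\times\mathbf{E}^\omega-\bm{\nu}\times\mathbf{E}^{i,\omega}$ with the first piece $O(\tau^{1/2})$ from step~(ii) and the second $O(\varepsilon)$ from the opening observation, and close by the standard far-field-versus-boundary-data estimate for the exterior PEC Maxwell problem (cf.\ \cite{Mon,Ned}).

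\textbf{Main obstacle.} The technically delicate part is step~(i): the energy identity in step~(ii) is a priori circular because its right-hand side contains the unknown solution, and one must close it with a constant that is independent of the (essentially arbitrary) interior medium in $\Sigma$. The rescaling $\widetilde{\mathbf{H}}=\tau\mathbf{H}^\omega$ and the accompanying compactness argument must carefully separate the three regimes (interior, vanishing coating, exterior) and show that any normalised blow-up sequence in the limit $\tau\to 0$ produces a nontrivial solution of a homogeneous exterior PEC problem, contradicting uniqueness. Once this uniform bound is secured, the remaining steps are routine trace and well-posedness estimates.
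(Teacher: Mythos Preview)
Your overall architecture (energy identity $\Rightarrow$ smallness of $\bm{\nu}\times\mathbf{E}^\omega$ on $\partial\Omega$ $\Rightarrow$ combine with the $\varepsilon$-bound on $\bm{\nu}\times\mathbf{E}^{i,\omega}$ $\Rightarrow$ exterior well-posedness) matches the paper's, and step~(iii) is exactly what the paper does at the end. The difference is in how you reach the estimate on $\bm{\nu}\times\mathbf{E}^\omega|_{\partial\Omega}$, and the paper's route is considerably simpler than yours: it eliminates your ``main obstacle'' entirely rather than confronting it.

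The key simplification you are missing is to run the energy identity on \emph{all of $\Omega$} rather than on the annulus $\Omega\setminus\overline{\Sigma}$. Because the transmission conditions across $\partial\Sigma$ are the natural ones, integration by parts of $\int_\Omega(\nabla\times\mathbf{H}^\omega+\mathrm{i}\omega\epsilon\mathbf{E}^\omega)\cdot\overline{\mathbf{E}^\omega}$ produces no $\partial\Sigma$ boundary term at all, only the $\partial\Omega$ term. Taking real parts gives
\[
\int_\Omega \sigma\mathbf{E}^\omega\cdot\overline{\mathbf{E}^\omega}\,dV=-\Re\int_{\partial\Omega}(\bm{\nu}\times\overline{\mathbf{E}^\omega})\cdot\mathbf{H}^\omega\,ds,
\]
and since $\sigma\geq 0$ in $\Sigma$ and $\sigma\geq\alpha_0\tau^{-1}$ in the coating, the left side dominates $\alpha_0\tau^{-1}\|\mathbf{E}^\omega\|_{L^2(\Omega\setminus\overline{\Sigma})}^2$. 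On the right side one splits $\mathbf{E}^\omega=\mathbf{E}^{i,\omega}+\mathbf{E}^{s,\omega}$, $\mathbf{H}^\omega=\mathbf{H}^{i,\omega}+\mathbf{H}^{s,\omega}$ and uses the exterior impedance map $\Lambda_{\Omega,\omega}^o$ to express $\bm{\nu}\times\mathbf{H}^{s,\omega}$ in terms of $\bm{\nu}\times\mathbf{E}^{s,\omega}$. This yields an inequality of the schematic form
\[
\|\mathbf{E}^\omega\|_{L^2(\Omega\setminus\overline{\Sigma})}^2\leq C\tau\bigl(\|\mathbf{E}^{i,\omega}\|^2+\|\mathbf{E}^{i,\omega}\|\,\|\bm{\nu}\times\mathbf{E}^{s,\omega}\|+\|\bm{\nu}\times\mathbf{E}^{s,\omega}\|^2\bigr),
\]
with the unknown $\|\bm{\nu}\times\mathbf{E}^{s,\omega}\|_{TH_{\mathrm{Div}}^{-1/2}(\partial\Omega)}$ appearing on the right. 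No $\partial\Sigma$ trace ever enters, so no uniform a~priori bound (your step~(i)) is required, and the independence from $(\epsilon_b,\mu_b,\sigma_b)$ is automatic.

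The second ingredient you should replace is your passage from $\|\mathbf{E}^\omega\|_{L^2}$ to the tangential trace. Rather than bounding $\nabla\times\mathbf{E}^\omega=\mathrm{i}\omega\tau\mu_c\mathbf{H}^\omega$ (which forces you to control $\mathbf{H}^\omega$ in the coating and re-introduces a dependence on step~(i)), observe that in the coating the \emph{second-order} equation
\[
\nabla\times\nabla\times\mathbf{E}^\omega=\mathrm{i}\omega\beta_0(\sigma_c-\mathrm{i}\omega\epsilon_c)\mathbf{E}^\omega
\]
has coefficients independent of $\tau$ (the factors of $\tau$ cancel because $\mu_c$ is scalar). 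A duality argument with a test function $\mathbf{F}\in H^2(\mathrm{curl},\Omega)$ chosen so that $\bm{\nu}\times\mathbf{F}=0$ on $\partial\Omega$, $\mathbf{F}\equiv 0$ in $\Sigma$, and $\bm{\nu}\times(\nabla\times\mathbf{F})$ realises an arbitrary element of $TH^{-1/2}_{\mathrm{Curl}}(\partial\Omega)$ then gives directly
\[
\|\bm{\nu}\times\mathbf{E}^\omega\|_{TH_{\mathrm{Div}}^{-1/2}(\partial\Omega)}\leq C\|\mathbf{E}^\omega\|_{L^2(\Omega\setminus\overline{\Sigma})},
\]
with $C$ independent of $\tau$ and of the interior medium. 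Combining the two displayed inequalities yields a closed quadratic inequality in $\|\bm{\nu}\times\mathbf{E}^{s,\omega}\|$ whose quadratic term carries a factor $\tau^{1/2}$; for $\tau$ small it can be absorbed into the left side, giving the $O(\tau^{1/2})$ bound without any compactness or blow-up argument. Your step~(i), while plausible in outline, is both unnecessary and genuinely delicate to make uniform in $(\epsilon_b,\mu_b,\sigma_b)$; the paper's device of integrating over all of $\Omega$ and testing against functions vanishing in $\Sigma$ is what buys that independence for free.
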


\begin{rem}\label{rem:2.1}
In terms of our earlier discussion in Section 1, $\mathcal{H}_\varepsilon(\mathbf{Y})$ is a nearly non-scattering wave set for the EM medium $(\mathbb{R}^3; \epsilon,\mu,\sigma)$ in \eqref{eq:est1}. Every PEC eigenvalue $\omega$ to \eqref{eq:peceigen} is a nearly non-scattering ``energy" in the sense that there exist certain incident fields of the Maxwell-Herglotz form \eqref{eq:mh} which generate nearly vanishing scattered wave fields. In the shadowless lamp setting, $(\Sigma; \epsilon_b, \mu_b, \sigma_b)$ signifies the surgeon(s), whereas $(\Omega\backslash\overline{\Sigma}; \tau^{-1}\epsilon_c, \tau\mu_c, \tau^{-1}\sigma_c)$ signifies suitable coat(s) made of conducting mediums. According to Theorem~\ref{thm:nonscattering1}, if the sources of the lamp are chosen from $\mathcal{H}_\varepsilon(\mathbf{Y})$, then there will be nearly no shadows shall be generated. 
\end{rem}

In a similar fashion, we define
\begin{equation*}
\mathbf{Z}=\{(\mathbf{u},\mathbf{v})\in L^2(\Omega;\mathbb{C}^3)^2;\ \mathbf{v}\in\mathbf{X},\ \mathbf{u}=-(\mathrm{i}\omega\epsilon_\infty)^{-1}\nabla\times\mathbf{v}\}.
\end{equation*} 
It is readily seen that any $(\mathbf{E}^\omega,\mathbf{H}^\omega)\in\mathbf{Z}$ satisfies the Maxwell system \eqref{eq:peceigen}, but with the homogeneous PEC boundary condition on $\partial\Omega$ replaced by the so-called PMC boundary condition,
\begin{equation*}
\bm{\nu}\times\mathbf{H}^\omega=0\quad\mbox{on}\ \ \partial\Omega. 
\end{equation*}
Similar to Theorem~\ref{thm:peceigen}, we know that $\mathbf{Z}$ contains infinitely many pairs of PMC eigenfunctions and we let $\mathcal{H}_\varepsilon(\mathbf{Z})$ denote an $\varepsilon$-net of the space $\mathbf{Z}$ in the sense of \eqref{eq:net1}. Then we have
\begin{thm}\label{thm:nonscattering2}
Let $\Sigma$ be a bounded Lipschitz domain and $\Omega$ be a bounded $C^2$ domain such that $\Sigma\Subset\Omega$ and $\Omega^c$ are connected. Consider an EM medium distribution as follows,
\begin{equation}\label{eq:emp12}
\mathbb{R}^3;\epsilon,\mu,\sigma=\begin{cases}
\Sigma;\ \ \epsilon_b,\mu_b,\sigma_b,\\
\Omega\backslash\overline{\Sigma};\ \ \tau\epsilon_c, \tau^{-1} \mu_c, \tau \sigma_c,\\
\mathbb{R}^3\backslash\overline{\Omega};\ \ \epsilon_\infty\cdot\mathbf{I}_{3\times 3}, \mu_\infty\cdot\mathbf{I}_{3\times 3}, 0\cdot\mathbf{I}_{3\times 3},
\end{cases}
\end{equation}
where $\mu_b$ is uniformly elliptic; and $\epsilon_b, \sigma_b$ and $\epsilon_c, \mu_c, \sigma_c$ are uniformly elliptic tensors with constant $\alpha_0\in\mathbb{R}_+$, and moreover it is assumed that $\epsilon_c=\beta_0\cdot\mathbf{I}_{3\times 3}$ and $\sigma_c=\eta_0\cdot\mathbf{I}_{3\times 3}$ with $\beta_0, \eta_0\in\mathbb{R}_+$. Consider the electromagnetic scattering problem \eqref{eq:maxwell2} associated with the EM medium $(\mathbb{R}^3;\epsilon,\mu,\sigma)$ described above, and a pair of incident fields $(\mathbf{E}^{i,\omega},\mathbf{H}^{i,\omega})=(\mathbf{E}_{\mathbf{a}}^{\kappa_\omega},\mathbf{H}_{\mathbf{a}}^{\kappa_\omega})\in\mathcal{H}_\varepsilon(\mathbf{Z})$. Then for sufficiently small $\tau$ and $\varepsilon$, we have
\begin{equation}\label{eq:est12}
\|\mathbf{E}_\infty^\omega\|_{L^2(\mathbb{S}^2; \mathbb{C}^3)}\leq C\Big( \tau^{1/2}\big\|\mathbf{E}_{\mathbf{a}}^{\kappa_\omega} \big\|_{H(\mathrm{curl},\Omega)}+ \tau^{1/2}\big\|\mathbf{H}_{\mathbf{a}}^{\kappa_\omega} \big\|_{H(\mathrm{curl}, \Omega)}+\varepsilon \Big),
\end{equation}
where $C$ depends on $\omega, \alpha_0, \epsilon_\infty, \mu_\infty, \epsilon_b, \sigma_b$ and $\Omega, \Sigma$, but independent of $\mu_b$. 

\end{thm}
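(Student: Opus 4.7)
The proof will closely parallel that of Theorem~\ref{thm:nonscattering1}, with the roles of electric and magnetic fields interchanged and the PEC condition replaced by the PMC condition $\bm{\nu}\times\mathbf{H}=0$ on $\partial\Omega$. The guiding heuristic is that the coating in \eqref{eq:emp12}, whose permeability diverges like $\tau^{-1}\mu_c$ as $\tau\to 0$, asymptotically enforces a PMC-type interface on $\partial\Omega$, so an incident Maxwell--Herglotz field approximating a PMC eigenpair $(\mathbf{u},\mathbf{v})\in\mathbf{Z}$ generates a nearly vanishing scattered wave.

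Let $(\mathbf{u},\mathbf{v})\in\mathbf{Z}$ be approximated by $(\mathbf{E}_{\mathbf{a}}^{\kappa_\omega},\mathbf{H}_{\mathbf{a}}^{\kappa_\omega})\in\mathcal{H}_\varepsilon(\mathbf{Z})$ to within $\varepsilon$ in $\interleave\cdot\interleave_1$, and decompose $(\mathbf{E}^\omega,\mathbf{H}^\omega)=(\mathbf{E}_{\mathbf{a}}^{\kappa_\omega},\mathbf{H}_{\mathbf{a}}^{\kappa_\omega})+(\mathbf{E}^{s,\omega},\mathbf{H}^{s,\omega})$. In the coating $\Omega\setminus\overline{\Sigma}$, the first Maxwell equation in \eqref{eq:maxwell2} yields the explicit relation $\mathbf{H}^\omega=\tau(\mathrm{i}\omega\mu_c)^{-1}\nabla\times\mathbf{E}^\omega$, and substitution into the second produces a $\tau$-independent curl-curl equation $\nabla\times(\mu_c^{-1}\nabla\times\mathbf{E}^\omega)=(\omega^2\epsilon_c+\mathrm{i}\omega\sigma_c)\mathbf{E}^\omega$ there. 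Using the variational formulation of \eqref{eq:maxwell2}, the uniform ellipticity of the coefficients, and the Silver--M\"uller radiation condition, I would first derive a $\tau$-uniform a priori bound
\[
\|\mathbf{E}^\omega\|_{H(\mathrm{curl},\Omega)}\leq C\bigl(\|\mathbf{E}_{\mathbf{a}}^{\kappa_\omega}\|_{H(\mathrm{curl},\Omega)}+\|\mathbf{H}_{\mathbf{a}}^{\kappa_\omega}\|_{H(\mathrm{curl},\Omega)}\bigr),
\]
which via the explicit formula above forces $\|\mathbf{H}^\omega\|_{L^2(\Omega\setminus\overline{\Sigma})}=O(\tau)$, and hence by standard trace inequalities
\[
\|\bm{\nu}\times\mathbf{H}^\omega\|_{H^{-1/2}(\partial\Omega)}=O(\tau^{1/2})\bigl(\|\mathbf{E}_{\mathbf{a}}^{\kappa_\omega}\|_{H(\mathrm{curl},\Omega)}+\|\mathbf{H}_{\mathbf{a}}^{\kappa_\omega}\|_{H(\mathrm{curl},\Omega)}\bigr).
\]

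Next, using the Stratton--Chu formula in $\mathbb{R}^3\setminus\overline{\Omega}$ together with the well-posedness of the exterior Maxwell problem with prescribed tangential magnetic trace (guaranteed for every real $\omega$ by Silver--M\"uller), $\mathbf{E}^\omega_\infty$ can be bounded by a constant multiple of $\|\bm{\nu}\times\mathbf{H}^{s,\omega}\|_{H^{-1/2}(\partial\Omega)}$. Decomposing $\bm{\nu}\times\mathbf{H}^{s,\omega}=\bm{\nu}\times\mathbf{H}^\omega-\bm{\nu}\times\mathbf{H}_{\mathbf{a}}^{\kappa_\omega}$, the first summand is $O(\tau^{1/2})$ by the previous paragraph; for the second, $\mathbf{H}_{\mathbf{a}}^{\kappa_\omega}$ is $\varepsilon$-close in $H^1(\Omega)$ to $\mathbf{v}\in\mathbf{X}$, and since $\bm{\nu}\times\mathbf{v}=0$ on $\partial\Omega$, trace continuity yields $\|\bm{\nu}\times\mathbf{H}_{\mathbf{a}}^{\kappa_\omega}\|_{H^{1/2}(\partial\Omega)}=O(\varepsilon)$. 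Assembling these bounds produces \eqref{eq:est12}.

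The main technical obstacle will be the $\tau$-uniform a priori estimate, since the magnetic coefficient $\tau^{-1}\mu_c$ in the coating degenerates as $\tau\to 0$ and a naive energy argument on the Maxwell system loses coercivity. My plan is to work with the reduced second-order formulation $\nabla\times[(\sigma-\mathrm{i}\omega\epsilon)^{-1}\nabla\times\mathbf{H}]=\mathrm{i}\omega\mu\mathbf{H}$, which after the substitution above becomes well-posed uniformly in $\tau$ inside the coating, coupled with the standard variational form in $\Sigma$ and the exterior problem in $\Omega^c$. A compactness--contradiction argument will then identify the $\tau\to 0$ limit with a transmission problem between $(\Sigma;\epsilon_b,\mu_b,\sigma_b)$ and $\mathbb{R}^3\setminus\overline{\Omega}$ coupled by a PMC condition on $\partial\Omega$, whose well-posedness (again via Silver--M\"uller in the exterior) closes the estimate. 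The asserted dependence structure of $C$ (on $\epsilon_b,\sigma_b$ but not on $\mu_b$) should emerge from favouring the $\mathbf{H}$-reduction, in which $\mu_b$ enters only as a bounded right-hand-side multiplier rather than as a coefficient of the principal part.
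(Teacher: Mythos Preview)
Your overall strategy---reduce to the PMC trace $\bm{\nu}\times\mathbf{H}^{s,\omega}$ on $\partial\Omega$, show it is $O(\tau^{1/2})+O(\varepsilon)$, and conclude via the exterior problem---is the right skeleton and matches the paper. But the way you propose to obtain the $O(\tau^{1/2})$ piece is different from the paper and contains a genuine gap.

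The central difficulty is your claimed $\tau$-uniform bound $\|\mathbf{E}^\omega\|_{H(\mathrm{curl},\Omega)}\leq C\bigl(\|\mathbf{E}_{\mathbf{a}}^{\kappa_\omega}\|+\|\mathbf{H}_{\mathbf{a}}^{\kappa_\omega}\|\bigr)$. This bound is almost certainly \emph{false} for the medium \eqref{eq:emp12}. Indeed, the basic power identity (multiply the second Maxwell equation by $\overline{\mathbf{E}^\omega}$ and integrate over $\Omega$; take real and imaginary parts) yields only
\[
\alpha_0\|\mathbf{E}^\omega\|_{L^2(\Sigma)}^2+\alpha_0\tau\|\mathbf{E}^\omega\|_{L^2(\Omega\setminus\overline\Sigma)}^2\leq |\text{bdry}|,\qquad
\alpha_0\tau^{-1}\|\mathbf{H}^\omega\|_{L^2(\Omega\setminus\overline\Sigma)}^2\leq C|\text{bdry}|,
\]
so that $\|\mathbf{H}^\omega\|_{L^2(\Omega\setminus\overline\Sigma)}=O(\tau^{1/2})$ while $\|\mathbf{E}^\omega\|_{L^2(\Omega\setminus\overline\Sigma)}$ is only controlled after multiplication by $\tau^{1/2}$. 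Since in the coating $\nabla\times\mathbf{E}^\omega=\mathrm{i}\omega\tau^{-1}\mu_c\mathbf{H}^\omega$, one gets $\|\nabla\times\mathbf{E}^\omega\|_{L^2(\Omega\setminus\overline\Sigma)}\sim\tau^{-1}\cdot\tau^{1/2}=\tau^{-1/2}$, which blows up. The compactness--contradiction sketch cannot rescue this: the transmission condition across $\partial\Omega$ in either the $\mathbf{E}$- or $\mathbf{H}$-reduction remains $\tau$-dependent (a factor $\tau^{\pm 1}$ on one side), so normalising and passing to a weak limit does not yield a well-posed limit problem for $\mathbf{E}^\omega$ in $H(\mathrm{curl},\Omega)$.

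There is also an internal inconsistency in your trace step. You write $\|\mathbf{H}^\omega\|_{L^2(\Omega\setminus\overline\Sigma)}=O(\tau)$ and then invoke ``standard trace inequalities'' to get $\|\bm\nu\times\mathbf{H}^\omega\|_{H^{-1/2}(\partial\Omega)}=O(\tau^{1/2})$. But there is no trace map from $L^2$ alone; one needs $H(\mathrm{curl})$ control, i.e.\ also $\|\nabla\times\mathbf{H}^\omega\|_{L^2}$. If your uniform bound \emph{did} hold, you would have $\|\nabla\times\mathbf{H}^\omega\|_{L^2(\Omega\setminus\overline\Sigma)}=\tau\|(\sigma_c-\mathrm{i}\omega\epsilon_c)\mathbf{E}^\omega\|=O(\tau)$ as well, and the trace would be $O(\tau)$, not $O(\tau^{1/2})$---a stronger conclusion than the theorem asserts. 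The appearance of $\tau^{1/2}$ in your display is thus unmotivated by your own argument.

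The paper bypasses all of this with a direct, self-contained route: it takes \emph{both} the real and imaginary parts of the energy identity \eqref{eq:pr12} to obtain $\|\mathbf{H}^\omega\|_{L^2(\Omega\setminus\overline\Sigma)}^2\leq C\tau\,|\text{bdry}|$ (this is where the dependence on $\epsilon_b,\sigma_b$ but not $\mu_b$ enters, since only $\int_\Sigma\sigma_b|\mathbf{E}^\omega|^2$ and $\int_\Sigma\epsilon_b|\mathbf{E}^\omega|^2$ appear). It then proves the analogue of Lemma~\ref{lem:l12} for $\mathbf{H}^\omega$---a duality/test-function argument using that $\nabla\times\nabla\times\mathbf{H}^\omega=\mathrm{i}\omega(\sigma_c-\mathrm{i}\omega\epsilon_c)\mu_c\mathbf{H}^\omega$ in the coating is $\tau$-independent (here the scalar assumptions $\epsilon_c=\beta_0\mathbf{I}$, $\sigma_c=\eta_0\mathbf{I}$ are used)---to bound $\|\bm\nu\times\mathbf{H}^\omega\|_{TH^{-1/2}_{\mathrm{Div}}(\partial\Omega)}$ by $C\|\mathbf{H}^\omega\|_{L^2(\Omega\setminus\overline\Sigma)}$. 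Combining and bootstrapping exactly as in \eqref{eq:l127}--\eqref{eq:l130} closes the estimate. I recommend you replace your uniform-bound/compactness plan with this direct energy-plus-duality argument.
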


\begin{rem}
The major difference between Theorems~\ref{thm:nonscattering1} and $\ref{thm:nonscattering2}$ is that in \eqref{eq:est1}, the estimate is independent of the EM content inside $\Sigma$, namely $(\Sigma; \epsilon_b,\mu_b,\sigma_b)$; whereas in \eqref{eq:est12}, the estimate is dependent on $(\Sigma; \epsilon_b, \sigma_b)$ (but independent of $\mu_b$). We believe this is mainly due to the argument that we shall implement for their proofs, and the estimate in \eqref{eq:est12} should also be independent of $(\Sigma; \epsilon_b,\mu_b,\sigma_b)$. 
\end{rem}

In Theorems~\ref{thm:nonscattering1} and \ref{thm:nonscattering2}, the conducting layer in $\Omega\backslash\overline{\Sigma}$ plays a critical role in our design. In what follows, we consider the nearly non-scattering wave fields in the case without such a conducting layer. To that end, we first introduce the following interior transmission problem.   
\begin{equation}\label{eq:gitp}
\begin{cases}
&\nabla\times\mathbf{E}^{t,\omega}(\mathbf{x})-\mathrm{i}\omega\mu_b\mathbf{H}^{t,\omega}(\mathbf{x})=0,\qquad\mathbf{x}\in\Sigma,\medskip\\
&\nabla\times\mathbf{H}^{t,\omega}(\mathbf{x})+\mathrm{i}\omega\epsilon_b\mathbf{E}^{t,\omega}(\mathbf{x})=0,\qquad\, \mathbf{x}\in\Sigma,\medskip\\
&\nabla\times\mathbf{U}^\omega(\mathbf{x})-\mathrm{i}\omega\mu_\infty\mathbf{V}^\omega(\mathbf{x})=0,\qquad\ \mathbf{x}\in\Sigma,\medskip\\
&\nabla\times\mathbf{V}^\omega(\mathbf{x})-\mathrm{i}\omega\epsilon_\infty\mathbf{U}^\omega(\mathbf{x})=0,\qquad\ \, \mathbf{x}\in\Sigma,\medskip\\
&\bm{\nu}\times\mathbf{E}^{t,\omega}(\mathbf{x})=\bm{\nu}\times\mathbf{U}^\omega(\mathbf{x}),\ \ \bm{\nu}\times\mathbf{H}^{t,\omega}(\mathbf{x})=\bm{\nu}\times\mathbf{V}^\omega(\mathbf{x}),\ \ \mathbf{x}\in\partial\Sigma,
\end{cases}
\end{equation}
where $\mu_b$ and $\epsilon_b$ are uniformly elliptic tensors in $\Sigma$. If there exists nontrivial solutions $(\mathbf{E}^{t,\omega},\mathbf{H}^{t,\omega})\in H(\text{curl},\Sigma)^2$ and $(\mathbf{U}^\omega, \mathbf{V}^{\omega})\in H(\text{curl},\Omega)^2$ to \eqref{eq:gitp} for a certain $\omega\in\mathbb{R}_+$, then $\omega$ is called an {\it interior transmission eigenvalue} associated with $(\Sigma;\epsilon_b,\mu_b,\epsilon_\infty,\mu_\infty)$, and $(\mathbf{E}^{t,\omega},\mathbf{H}^{t,\omega})$, $(\mathbf{U}^\omega,\mathbf{V}^\omega)$ are called the corresponding {\it interior transmission eigenfunctions}. Some brief remarks about the interior transmission eigenvalue problem \eqref{eq:gitp} are in order. We refer to \cite{CakCol} for a comprehensive account on the origin of the interior transmission eigenvalue problems. The existence of an infinite discrete set of interior transmission eigenvalues with $+\infty$ as the only accumulation point for \eqref{eq:gitp} was established in \cite{CH} under certain conditions on the scattering medium $(\Sigma;\epsilon_b,\mu_b)$. Establishing the existence of infinitely many interior transmission eigenvalues for \eqref{eq:gitp} for generic EM mediums $(\Sigma;\epsilon_b,\mu_b)$ is beyond the main aim of the present article. We shall assume the existence of interior transmission eigenvalues and eigenfunctions for \eqref{eq:gitp} without imposing further conditions on $(\Sigma;\epsilon_b,\mu_b)$. The notion of non-scattering energy associated with the interior transmission eigenvalue problems was introduced \cite{BPS}, which is closely related to our study in the sequel. In \cite{BPS}, the authors consider the quantum scattering governed by the Schr\"odiner equation. It is defined for a certain potential that the interior transmission eigenvalue is a non-scattering energy if the corresponding eigenfunction is a Herglotz wave function. In the context of the Maxwell system \eqref{eq:gitp}, an interior transmission eigenvalue $\omega$ is called a non-scattering ``energy" if the interior transmission eigenfunctions $(\mathbf{U}^\omega,\mathbf{V}^\omega)$ happen to be a pair of Maxwell-Herglotz fields of the form \eqref{eq:mh}. It can be easily shown that if $\omega$ is a non-scattering ``energy", and one uses the corresponding pair of eigenfunctions $(\mathbf{U}^\omega,\mathbf{V}^\omega)$ as the incident fields to impinge on $(\Sigma;\epsilon_b,\mu_b)$, then far-field pattern generated will be identically vanishing. However, as in \cite{BPS}, it is highly suspicious whether there exists non-scattering $\omega$ for a generic $(\Sigma; \epsilon_b,\mu_b)$ unless it is radially symmetric. Next, we shall show that as soon as the set $\Sigma$ satisfies a certain generic condition, then every interior transmission eigenvalue $\omega$ is a nearly non-scattering ``energy"; see also our discussion in Remark~\ref{rem:2.1} about the existence of infinitely many nearly non-scattering ``energy" for the Maxwell system in a different scenario. 

In our subsequent study concerning Theorem~\ref{thm:nonscattering3}, we assume that $\Sigma$ is a bounded Lipschitz domain if $\omega$ is not a PEC eigenvalue associated with $(\Sigma;\epsilon_\infty, \mu_\infty)$; otherwise we assume that $\Sigma$ is a $C^2$ domain. This regularity assumption shall be mainly needed for the Maxwell-Herglotz approximation as sated in Theorem~\ref{thm:main1}. In the sequel, we shall need to make use of the following Sobolev spaces,
\[
\begin{split}
TH^{-1/2}(\Sigma)=\{\mathbf{u}\in H^{-1/2}(\Sigma;\mathbb{C}^3);\ \langle\bm{\nu},\mathbf{u}\rangle=0\},\\
TH_{\text{Div}}^{-1/2}(\Sigma)=\{\mathbf{u}\in TH^{-1/2}(\Sigma);\ \mathrm{Div}(\mathbf{u})\in H^{-1/2}(\Sigma)\},\\
TH_{\text{Curl}}^{-1/2}(\Sigma)=\{\mathbf{u}\in TH^{-1/2}(\Sigma);\ \mathrm{Curl}(\mathbf{u})\in H^{-1/2}(\Sigma)\},
\end{split}
\] 
where $\mathrm{Div}:=\mathrm{div}_{\partial\Sigma}$ and $\mathrm{Curl}:=\mathrm{curl}_{\partial\Sigma}$, respectively, signify the surface divergence and curl on $\partial\Sigma$. It is known that $TH_{\text{Div}}^{-1/2}(\Sigma)$ is the tangential trace space of $H(\mathrm{curl}, \Sigma)$, and the dual space of $TH_{\text{Div}}^{-1/2}(\Sigma)$ is $TH_{\text{Curl}}^{-1/2}(\Sigma)$; see \cite{Alo,BCS,Ces,Mon}.

We let $\mathcal{T}=\mathcal{T}(\Sigma;\epsilon_b,\mu_b)$ denote the set of interior transmission eigenvalues associated with $(\Sigma;\epsilon_b,\mu_b)$ defined in \eqref{eq:gitp}, and $\mathbf{T}_\omega=\mathbf{T}_\omega(\Sigma;\epsilon_b,\mu_b)$ denote the eigen-space consisting of the pairs $(\mathbf{U}^\omega,\mathbf{V}^\omega)$ corresponding to $\omega\in\mathcal{T}$. We also set
\begin{equation*}
\mathbf{T}=\bigcup_{\omega\in\mathcal{T}}\mathbf{T}_\omega. 
\end{equation*}

Let $\omega\in\mathcal{T}$. We further assume that $\omega$ is not a PEC eigenvalue for $(\Sigma;\epsilon_b,\mu_b)$ in the sense that the following Maxwell system
\begin{equation}\label{eq:impedanceinside}
\begin{cases}
& \nabla\times\mathbf{E}^\omega(\mathbf{x})-\mathrm{i}\omega\mu_b\mathbf{H}^\omega(\mathbf{x})=0,\quad \ \ \ \mathbf{x}\in \Sigma,\medskip\\
&\nabla\times\mathbf{H}^\omega(\mathbf{x})+\mathrm{i}\omega\epsilon_b\mathbf{E}^\omega(\mathbf{x})=0,\quad \ \ \ \mathbf{x}\in \Sigma,\medskip\\
&\displaystyle{\bm{\nu}\times \mathbf{E}^\omega(\mathbf{x})=0,\quad \mathbf{x}\in\partial\Sigma.  }
\end{cases}
\end{equation}
admits only trivial solutions. Define the interior boundary {\it impedance map} as
\begin{equation*}
\Lambda_{\Sigma,\omega}^i(\bm{\psi})=\bm{\nu}\times\mathbf{H}^\omega\big|_{\partial\Sigma}:\ TH_{\text{Div}}^{-1/2}(\partial\Sigma)\rightarrow TH_{\text{Div}}^{-1/2}(\partial\Sigma),
\end{equation*}
where $\mathbf{H}^\omega\in H(\text{curl}, \Sigma)$ is the solution to \eqref{eq:impedanceinside} with the homogeneous boundary condition replaced by
\[
{\bm{\nu}\times \mathbf{E}^\omega(\mathbf{x})=\bm{\psi}(\mathbf{x})\in TH_{\text{Div}}^{-1/2}(\partial\Sigma),\quad \mathbf{x}\in\partial\Sigma.  }
\]
Clearly, since $\omega$ is not a PEC for $(\Sigma; \epsilon_b, \mu_b)$, the Maxwell system \eqref{eq:impedanceinside} is well-posed and hence $\Lambda_{\Sigma,\omega}^i$ is well-defined. It is also assumed that $\omega$ is not a PMC eigenvalue for $(\Sigma;\epsilon_b,\mu_b)$ in the sense that the Maxwell system \eqref{eq:impedanceinside} with the homogeneous boundary condition replaced by $\bm{\nu}\times\mathbf{H}^\omega=0$ on $\partial\Sigma$ admits only trivial solutions. Hence, we know that $\Lambda_{\Sigma,\omega}^i$ is invertible. It is remarked that the PEC (resp. PMC) eigenvalues for $(\Sigma;\epsilon_b,\mu_b)$ form an infinite discrete set possessing similar properties to those stated in Theorem~\ref{thm:peceigen} (cf. \cite{Mon}). We also consider the following exterior Maxwell system 
\begin{equation}\label{eq:impedanceoutside}
\begin{cases}
& \nabla\times\mathbf{E}^\omega(\mathbf{x})-\mathrm{i}\omega\mu_\infty\mathbf{H}^\omega(\mathbf{x})=0,\quad \ \ \ \mathbf{x}\in \mathbb{R}^3\backslash\overline{\Sigma},\medskip\\
&\nabla\times\mathbf{H}^\omega(\mathbf{x})+\mathrm{i}\omega\epsilon_\infty\mathbf{E}^\omega(\mathbf{x})=0,\quad \ \ \ \mathbf{x}\in \mathbb{R}^3\backslash\overline{\Sigma},\medskip\\
&\displaystyle{\lim_{\|{\mathbf{x}}\|\rightarrow+\infty}\big(\mu_\infty^{1/2}\mathbf{H}^{\omega}(\mathbf{x})\times \mathbf{x}-\|{\mathbf{x}}\| \epsilon_\infty^{1/2}\mathbf{E}^{\omega}(\mathbf{x})\big)=0. }\medskip\\
&\displaystyle{\bm{\nu}\times \mathbf{E}^\omega(\mathbf{x})=\bm{\psi}(\mathbf{x})\in TH_{\text{Div}}^{-1/2}(\partial\Sigma),\quad \mathbf{x}\in\partial\Sigma.  }
\end{cases}
\end{equation}
Define the exterior boundary impedance map as 
\begin{equation}\label{eq:oepm1}
\Lambda_{\Sigma,\omega}^o(\bm{\psi})=\bm{\nu}\times\mathbf{H}^\omega\big|_{\partial\Sigma}:\ TH_{\text{Div}}^{-1/2}(\partial\Sigma)\rightarrow TH_{\text{Div}}^{-1/2}(\partial\Sigma),
\end{equation}
where $\mathbf{H}^\omega\in H_{doc}(\text{curl}, \mathbb{R}^3\backslash\overline{\Sigma})$ is the solution to \eqref{eq:impedanceoutside}. Since the exterior scattering problem \eqref{eq:impedanceoutside} is well-posed, one clearly has that $\Lambda_{\Sigma,\omega}^o$ is well-defined and moreover, it is invertible. Next, we say that $\Sigma$ satisfies the non-transparency condition with respect to $\omega$ if the following condition is satisfied:
\begin{equation}\label{eq:nontransparency1}
\big\|\Lambda_{\Sigma,\omega}^i\circ\big(\Lambda_{\Sigma,\omega}^o\big)^{-1}\big\|_{\mathcal{L}(TH_{\text{Div}}^{-1/2}(\partial\Sigma),TH_{\text{Div}}^{-1/2}(\partial\Sigma))}\neq 1;
\end{equation}
A remark concerning the non-transparency condition is that unless $\bm{\psi}\equiv 0$, one has that 
\begin{equation}\label{eq:rt1}
\Lambda_{\Sigma,\omega}^i\circ\big(\Lambda_{\Sigma,\omega}^o\big)^{-1}(\bm{\psi})\neq\bm{\psi}\quad\mbox{for any}\ \ \bm{\psi}\in TH_{\text{Div}}^{-1/2}(\partial\Sigma). 
\end{equation}
Indeed, if the equality holds in \eqref{eq:rt1} for some $\bm{\psi}\in TH_{\text{Div}}^{-1/2}(\partial\Sigma)$, then by defining $(\mathbf{E}^\omega,\mathbf{H}^\omega)\in H_{loc}(\text{curl},\mathbb{R}^3)^2$ to be the solution to \eqref{eq:impedanceinside} in $\Sigma$, and to be the solution to \eqref{eq:impedanceoutside} in $\mathbb{R}^3\backslash\overline{\Sigma}$, one has a pair of entire solutions to the Maxwell system which satisfies the Silver-M\"uller radiation condition. Hence, the entire solutions defined above must be identically zero which readily gives that $\bm{\psi}\equiv 0$. Therefore, it is justifiable to claim that the non-transparency condition introduced above is a generic condition for the domain $\Sigma$ and its interior transmission eigenvalue $\omega$.  

We have

\begin{thm}\label{thm:nonscattering3}
Let $\Sigma$ be a bounded domain with $\Sigma^c$ connected. Consider an EM medium distribution as follows,
\begin{equation*}
\mathbb{R}^3;\epsilon,\mu=\begin{cases}
\Sigma;\ \ \epsilon_b,\ \mu_b,\\
\mathbb{R}^3\backslash\overline{\Sigma};\ \ \epsilon_\infty\cdot\mathbf{I}_{3\times 3},\ \mu_\infty\cdot\mathbf{I}_{3\times 3},
\end{cases}
\end{equation*}
where $\epsilon_b, \mu_b$ are uniformly elliptic tensors. Consider the electromagnetic scattering problem \eqref{eq:maxwell2} associated with the EM medium $(\mathbb{R}^3;\epsilon,\mu)$ described above. Let $\omega\in\mathcal{T}(\Sigma;\epsilon_b,\mu_b)$ and assume that $\omega$ is not a PEC/PMC eigenvalue to $(\Sigma;\epsilon_b,\mu_b)$, and that $\Sigma$ satisfies the non-transparency condition with respect to $\omega$, namely \eqref{eq:nontransparency1}. Let the pair of incident fields be given as
\begin{equation*}
(\mathbf{E}^{i,\omega},\mathbf{H}^{i,\omega})=(\mathbf{E}_{\mathbf{a}}^{\kappa_\omega},\mathbf{H}_{\mathbf{a}}^{\kappa_\omega})\in\mathcal{H}_\varepsilon(\mathbf{T}_\omega),
\end{equation*} 
where by Theorem~\ref{thm:main1}, $\mathcal{H}_\varepsilon(\mathbf{T}_\omega)$ denotes an $\varepsilon$-net of the space $\mathbf{T}_\omega$ in the sense of \eqref{eq:net1}, but with the norm $|\|\cdot\||_1$ replaced by $|\|\cdot\||_2$. For sufficiently small $\varepsilon\in\mathbb{R}_+$, there holds
\begin{equation}\label{eq:est123}
\|\mathbf{E}_\infty^\omega\|_{L^2(\mathbb{S}^2; \mathbb{C}^3)}\leq C\varepsilon,
\end{equation}
where $C$ depends only on $\omega, \epsilon_\infty, \mu_\infty$ and $\Sigma$.

\end{thm}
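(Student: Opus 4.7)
The plan is to reduce the far-field estimate \eqref{eq:est123} to a bound on $\bm{\nu}\times\mathbf{E}^{s,\omega}|_{\partial\Sigma}$ in $TH_{\text{Div}}^{-1/2}(\partial\Sigma)$, and then to exploit the non-transparency condition in order to absorb the smallness of the Maxwell--Herglotz approximation error into that trace. Let $(\mathbf{E}^\omega,\mathbf{H}^\omega)$ and $(\mathbf{E}^{s,\omega},\mathbf{H}^{s,\omega})$ denote the total and scattered fields arising from the incident pair $(\mathbf{E}_{\mathbf{a}}^{\kappa_\omega},\mathbf{H}_{\mathbf{a}}^{\kappa_\omega})$. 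Since the exterior problem \eqref{eq:impedanceoutside} is well-posed and the associated far-field map is bounded from $TH_{\text{Div}}^{-1/2}(\partial\Sigma)$ into $L^2(\mathbb{S}^2;\mathbb{C}^3)$, it is enough to control the tangential trace of $\mathbf{E}^{s,\omega}$ on $\partial\Sigma$.

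Let $(\mathbf{E}^{t,\omega},\mathbf{H}^{t,\omega})$ and $(\mathbf{U}^\omega,\mathbf{V}^\omega)$ be the interior transmission eigenfunctions whose exterior-side pair $(\mathbf{U}^\omega,\mathbf{V}^\omega)$ is approximated by $(\mathbf{E}_{\mathbf{a}}^{\kappa_\omega},\mathbf{H}_{\mathbf{a}}^{\kappa_\omega})$ in the $H(\mathrm{curl},\Sigma)$ sense within error $\varepsilon$. Set $\bm{\chi}:=\bm{\nu}\times(\mathbf{E}^\omega-\mathbf{E}^{t,\omega})|_{\partial\Sigma}$. Exploiting the transmission condition for the scattering problem, the identity $\bm{\nu}\times\mathbf{E}^{t,\omega}=\bm{\nu}\times\mathbf{U}^\omega$ from \eqref{eq:gitp}, and the continuity of the tangential trace $H(\mathrm{curl},\Sigma)\to TH_{\text{Div}}^{-1/2}(\partial\Sigma)$, one obtains
\[
\bm{\nu}\times\mathbf{E}^{s,\omega}\big|_{\partial\Sigma}=\bm{\chi}+\bm{\nu}\times\bigl(\mathbf{U}^\omega-\mathbf{E}_{\mathbf{a}}^{\kappa_\omega}\bigr)\big|_{\partial\Sigma}=\bm{\chi}+O(\varepsilon),
\]
and, analogously, $\bm{\nu}\times\mathbf{H}^{s,\omega}|_{\partial\Sigma}=\bm{\nu}\times(\mathbf{H}^\omega-\mathbf{H}^{t,\omega})|_{\partial\Sigma}+O(\varepsilon)$.

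The crucial observation is that $(\mathbf{E}^\omega-\mathbf{E}^{t,\omega},\mathbf{H}^\omega-\mathbf{H}^{t,\omega})$ solves the interior Maxwell system with medium $(\epsilon_b,\mu_b)$ in $\Sigma$, while $(\mathbf{E}^{s,\omega},\mathbf{H}^{s,\omega})$ solves the radiating exterior Maxwell system in $\mathbb{R}^3\setminus\overline{\Sigma}$. Since $\omega$ is neither a PEC nor a PMC eigenvalue for $(\Sigma;\epsilon_b,\mu_b)$, both impedance maps $\Lambda_{\Sigma,\omega}^i$ and $\Lambda_{\Sigma,\omega}^o$ are well-defined and invertible, yielding $\bm{\nu}\times(\mathbf{H}^\omega-\mathbf{H}^{t,\omega})|_{\partial\Sigma}=\Lambda_{\Sigma,\omega}^i(\bm{\chi})$ and $\bm{\nu}\times\mathbf{H}^{s,\omega}|_{\partial\Sigma}=\Lambda_{\Sigma,\omega}^o(\bm{\nu}\times\mathbf{E}^{s,\omega}|_{\partial\Sigma})$. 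Substituting the trace identities above into the impedance matching relation gives
\[
\bigl(\Lambda_{\Sigma,\omega}^i\circ(\Lambda_{\Sigma,\omega}^o)^{-1}-I\bigr)\bigl(\Lambda_{\Sigma,\omega}^o(\bm{\chi})\bigr)=O(\varepsilon)\quad\text{in}\ TH_{\text{Div}}^{-1/2}(\partial\Sigma).
\]
The non-transparency condition \eqref{eq:nontransparency1} will then be invoked to conclude that $\Lambda_{\Sigma,\omega}^i\circ(\Lambda_{\Sigma,\omega}^o)^{-1}-I$ is boundedly invertible, forcing $\bm{\chi}=O(\varepsilon)$ and hence $\bm{\nu}\times\mathbf{E}^{s,\omega}|_{\partial\Sigma}=O(\varepsilon)$, from which \eqref{eq:est123} follows by the exterior stability.

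The delicate point is the quantitative inversion step: the stated condition $\|\Lambda_{\Sigma,\omega}^i\circ(\Lambda_{\Sigma,\omega}^o)^{-1}\|\neq 1$ controls only the operator norm, while what the argument truly requires is a quantitative lower bound on $\Lambda_{\Sigma,\omega}^i\circ(\Lambda_{\Sigma,\omega}^o)^{-1}-I$. This will most plausibly rest on upgrading the injectivity argument sketched around \eqref{eq:rt1} through a compactness/Fredholm-alternative perturbation of the principal part of the exterior Calder\'on operator, so that injectivity promotes to a continuous inverse with constant depending only on $\omega$ and $\Sigma$. The remaining ingredients---the $H(\mathrm{curl})$ trace inequality, well-posedness of the interior and exterior impedance problems, and the far-field bound via the Stratton--Chu representation---are standard tools in the Maxwell theory and will enter as black boxes.
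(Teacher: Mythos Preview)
Your argument is essentially the paper's own proof. The paper subtracts the transmission eigenfunctions from the total field, applies the interior and exterior impedance maps, and arrives at
\[
\bigl(I-\Lambda_{\Sigma,\omega}^i\circ(\Lambda_{\Sigma,\omega}^o)^{-1}\bigr)\bigl(\bm{\nu}\times\mathbf{H}^{s,\omega}\big|_{\partial\Sigma}\bigr)
=\Lambda_{\Sigma,\omega}^i\bigl(\bm{\nu}\times(\mathbf{E}^{i,\omega}-\mathbf{U}^\omega)\bigr)-\bm{\nu}\times(\mathbf{H}^{i,\omega}-\mathbf{V}^\omega),
\]
which is your displayed equation once one identifies $\Lambda_{\Sigma,\omega}^o(\bm{\chi})=\bm{\nu}\times\mathbf{H}^{s,\omega}+O(\varepsilon)$; the only cosmetic difference is that the paper tracks the magnetic trace $\bm{\nu}\times\mathbf{H}^{s,\omega}$ rather than the electric one.

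Your flag on the inversion step is well taken, and you should know that the paper does \emph{not} resolve it: after reaching the equation above it simply writes ``Using the non-transparency condition \ldots\ one readily deduces'' the trace bound, invoking nothing beyond the bare hypothesis $\|\Lambda_{\Sigma,\omega}^i\circ(\Lambda_{\Sigma,\omega}^o)^{-1}\|\neq 1$. That condition yields a Neumann-series inverse when the norm is strictly less than $1$, but the case $>1$ is not addressed, and no Fredholm or compactness argument of the kind you sketch appears. So on this point you are being more careful than the paper; your proposed upgrade via Fredholm alternative for the Calder\'on operator is a reasonable route, but it goes beyond what the paper itself supplies.
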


\section{Proofs of the major theorems}

In this section, we present the proofs for the major theorems in Section 2. 

\subsection{Proof of Theorem~\ref{thm:nonscattering1}}

In order prove Theorem~\ref{thm:nonscattering1}, we first derive two lemmas. In what follows, we shall make use of the exterior impedance map defined in \eqref{eq:oepm1} associated with $(\mathbb{R}^3\backslash\overline{\Omega}; \epsilon_\infty,\mu_\infty)$, which is denoted by $\Lambda_{\Omega,\omega}^o: TH_{\mathrm{Div}}^{-1/2}(\partial\Omega)\rightarrow TH_{\mathrm{Div}}^{-1/2}(\partial\Omega)$. 

\begin{lem}\label{lem:l11}
There holds,
\begin{equation}\label{eq:l11}
\begin{split}
&\Big\|\mathbf{E}^\omega\Big\|^2_{L^2(\Omega\backslash\overline{\Sigma};\mathbb{C}^3)}\\
\leq & C\tau\|\mathbf{E}^{\kappa_\omega}_a\|_{H(\text{\emph{curl}},\Omega)}^2+C\tau\|\mathbf{E}_{\mathbf{a}}^{\kappa_\omega}\|_{H(\mathrm{curl}, \Omega)}\Big\|\Lambda_{\Omega,\omega}^o(\bm{\nu}\times \mathbf{E}^{s,\omega}\big|_{\partial\Omega})\Big\|_{TH_{\mathrm{Div}}^{-1/2}(\partial\Omega)}\\
&+C\tau\|\mathbf{H}_{\mathbf{a}}^{\kappa_\omega}\|_{H(\mathrm{curl},\Omega)}\Big\|\bm{\nu}\times \mathbf{E}^{s,\omega}\Big\|_{TH_{\mathrm{Div}}^{-1/2}(\partial\Omega)}\\
&+C\tau\Big\|\bm{\nu}\times \mathbf{E}^{s,\omega}\Big\|_{TH_{\mathrm{Div}}^{-1/2}(\partial\Omega)}\Big\|\Lambda_{\Omega,\omega}^o(\bm{\nu}\times \mathbf{E}^{s,\omega}\big|_{\partial\Omega})\Big\|_{TH_{\mathrm{Div}}^{-1/2}(\partial\Omega)},
\end{split}
\end{equation}
where $C$ is a positive constant depending only on $\Omega$ and $\alpha_0, \omega, \epsilon_\infty, \mu_\infty$. 
\end{lem}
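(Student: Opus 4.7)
The plan is to derive \eqref{eq:l11} from an energy identity obtained by testing the Maxwell system \eqref{eq:maxwell2} with $\overline{\mathbf{E}^\omega}$; after extracting real parts, the only surviving volume contribution is the conductivity term in the highly conducting coating, and the remaining right-hand side is a boundary pairing on $\partial\Omega$ that is then split into an incident/scattered expansion.

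Concretely, I would pair the second equation of \eqref{eq:maxwell2} with $\overline{\mathbf{E}^\omega}$ and integrate by parts using the first equation, separately in $D:=\Omega\setminus\overline{\Sigma}$ and in $\Sigma$. Because $\epsilon$, $\mu$ are real symmetric, the $\epsilon$ and $\mu$ volume contributions are purely imaginary; taking real parts leaves only the conductivity terms. Adding the two real identities, the boundary integrals on $\partial\Sigma$ cancel thanks to the transmission continuity of $\bm{\nu}\times\mathbf{E}^\omega$ and $\bm{\nu}\times\mathbf{H}^\omega$, yielding
\begin{equation*}
\tau^{-1}\int_D \sigma_c\mathbf{E}^\omega\cdot\overline{\mathbf{E}^\omega}\,d\mathbf{x} + \int_\Sigma \sigma_b\mathbf{E}^\omega\cdot\overline{\mathbf{E}^\omega}\,d\mathbf{x} = \mathrm{Re}\int_{\partial\Omega}(\bm{\nu}\times\mathbf{H}^\omega)\cdot\overline{\mathbf{E}^\omega}\,dS.
\end{equation*}
Since $\sigma_b\geq 0$ and $\sigma_c\geq \alpha_0\mathbf{I}_{3\times 3}$, dropping the $\Sigma$-term immediately gives $\|\mathbf{E}^\omega\|_{L^2(D;\mathbb{C}^3)}^2\leq C\tau\,\bigl|\int_{\partial\Omega}(\bm{\nu}\times\mathbf{H}^\omega)\cdot\overline{\mathbf{E}^\omega}\,dS\bigr|$, and the independence of $C$ from $(\epsilon_b,\mu_b,\sigma_b)$ is built in because nothing from $\Sigma$ survives past this step.

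Next I would split $\mathbf{E}^\omega=\mathbf{E}_{\mathbf{a}}^{\kappa_\omega}+\mathbf{E}^{s,\omega}$, $\mathbf{H}^\omega=\mathbf{H}_{\mathbf{a}}^{\kappa_\omega}+\mathbf{H}^{s,\omega}$ in the boundary pairing and expand it as $I_1+I_2+I_3+I_4$. The purely incident piece $I_1$ is converted by Stokes' theorem back into a volume integral over $\Omega$ and estimated by $C\|\mathbf{E}_{\mathbf{a}}^{\kappa_\omega}\|_{H(\mathrm{curl},\Omega)}^2$, using $\|\mathbf{H}_{\mathbf{a}}^{\kappa_\omega}\|_{L^2(\Omega)}\leq C\|\nabla\times \mathbf{E}_{\mathbf{a}}^{\kappa_\omega}\|_{L^2(\Omega)}$; this is the first term of \eqref{eq:l11}. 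For $I_2,I_3,I_4$ I would combine the duality between $TH_{\mathrm{Div}}^{-1/2}(\partial\Omega)$ and $TH_{\mathrm{Curl}}^{-1/2}(\partial\Omega)$ with the standard trace inequality $\|\mathbf{u}_T\|_{TH_{\mathrm{Curl}}^{-1/2}(\partial\Omega)}\leq C\|\mathbf{u}\|_{H(\mathrm{curl},\Omega)}$. The decisive structural input is that $(\mathbf{E}^{s,\omega},\mathbf{H}^{s,\omega})$ satisfies the homogeneous Maxwell system with the Silver--M\"uller condition outside $\Omega$, so $\bm{\nu}\times\mathbf{H}^{s,\omega}|_{\partial\Omega}=\Lambda_{\Omega,\omega}^{o}(\bm{\nu}\times\mathbf{E}^{s,\omega}|_{\partial\Omega})$; substituting this into $I_3$ and $I_4$ produces exactly the $\Lambda_{\Omega,\omega}^{o}$-dependent factors in the last three terms of \eqref{eq:l11}. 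Multiplying through by $\tau$ yields the claim.

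The main obstacle is not the algebra but the careful handling of the trace spaces: one must interpret the pairing on $\partial\Omega$ correctly between $TH_{\mathrm{Div}}^{-1/2}$ and $TH_{\mathrm{Curl}}^{-1/2}$, and in the $I_4$ estimate one needs to control $\|\mathbf{E}^{s,\omega}_T\|_{TH_{\mathrm{Curl}}^{-1/2}(\partial\Omega)}$ by $\|\bm{\nu}\times\mathbf{E}^{s,\omega}\|_{TH_{\mathrm{Div}}^{-1/2}(\partial\Omega)}$, which follows from the continuity of $(\Lambda_{\Omega,\omega}^{o})^{-1}$ together with the regularity of the exterior Maxwell problem. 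Apart from this technical point the proof is a clean integration-by-parts calculation.
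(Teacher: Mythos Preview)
Your proposal is correct and follows essentially the same route as the paper: test the Maxwell system against $\overline{\mathbf{E}^\omega}$, take real parts so that only the conductivity term survives, bound it from below by $\alpha_0\tau^{-1}\|\mathbf{E}^\omega\|_{L^2(\Omega\setminus\overline{\Sigma})}^2$, and expand the boundary pairing on $\partial\Omega$ into the four incident/scattered pieces, replacing $\bm{\nu}\times\mathbf{H}^{s,\omega}$ by $\Lambda_{\Omega,\omega}^{o}(\bm{\nu}\times\mathbf{E}^{s,\omega})$. The paper integrates once over all of $\Omega$ rather than separately over $\Sigma$ and $\Omega\setminus\overline{\Sigma}$, but this is equivalent to your argument since $(\mathbf{E}^\omega,\mathbf{H}^\omega)\in H(\mathrm{curl},\Omega)^2$ and the interior traces on $\partial\Sigma$ match.

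One simplification: the ``main obstacle'' you flag for $I_4$ is not really an obstacle and does not require the exterior Maxwell regularity. The paper handles all the mixed and scattered boundary terms by viewing $\int_{\partial\Omega}\mathbf{j}\cdot(\mathbf{m}\times\bm{\nu})\,ds$ as a bounded skew-symmetric bilinear form on $TH_{\mathrm{Div}}^{-1/2}(\partial\Omega)\times TH_{\mathrm{Div}}^{-1/2}(\partial\Omega)$; equivalently, the tangential rotation $\mathbf{m}\mapsto \mathbf{m}\times\bm{\nu}$ is an isomorphism $TH_{\mathrm{Div}}^{-1/2}(\partial\Omega)\to TH_{\mathrm{Curl}}^{-1/2}(\partial\Omega)$, so $\|\mathbf{E}^{s,\omega}_T\|_{TH_{\mathrm{Curl}}^{-1/2}}\leq C\|\bm{\nu}\times\mathbf{E}^{s,\omega}\|_{TH_{\mathrm{Div}}^{-1/2}}$ is purely algebraic and needs no appeal to $(\Lambda_{\Omega,\omega}^{o})^{-1}$.
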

\begin{proof}
It is first noted that $(\mathbf{E}^\omega,\mathbf{H}^\omega)\in H(\text{curl},\Omega)^2$ satisfies the following Maxwell system 
\begin{equation}\label{eq:pr11}
\begin{split}
&\nabla\times\mathbf{E}^\omega-\mathrm{i}\omega\mu\mathbf{H}^\omega=0,\\
&\nabla\times\mathbf{H}^\omega+\mathrm{i}\omega\epsilon\mathbf{E}^\omega=\sigma\mathbf{E}^\omega,
\end{split}\quad\text{in}\ \ \Omega,
\end{equation}
where $(\Omega;\epsilon,\mu,\sigma)$ is given in \eqref{eq:emp1}. By \eqref{eq:pr11} and using integration by parts, one can calculate as follows,
\begin{equation}\label{eq:pr12}
\begin{split}
&\int_{\Omega}\sigma\mathbf{E}^\omega\cdot\overline{\mathbf{E}^\omega}\, dV=\int_\Omega\left(\nabla\times\mathbf{H}^\omega+\mathrm{i}\omega\epsilon\mathbf{E}^\omega\right)\cdot\overline{\mathbf{E}^\omega}\, dV\\
=&\int_{\Omega}\left(\nabla\times\overline{\mathbf{E}^\omega}\right)\cdot\mathbf{H}^\omega\, dV-\int_{\partial\Omega}\left(\bm{\nu}\times\overline{\mathbf{E}^\omega}\right)\cdot \mathbf{H}^\omega\, ds+\mathrm{i}\omega\int_{\Omega}\epsilon\mathbf{E}^\omega\cdot\overline{\mathbf{E}^\omega}\, dV\\
=&-\mathrm{i}\omega\int_{\Omega}\mu\overline{\mathbf{H}^\omega}\cdot\mathbf{H}^\omega\, dV+\mathrm{i}\omega\int_{\Omega}\epsilon\overline{\mathbf{E}^\omega}\cdot\mathbf{E}^\omega\, dV-\int_{\partial\Omega}(\bm{\nu}\times\overline{\mathbf{E}^\omega})\cdot\mathbf{H}^\omega\, ds. 
\end{split}
\end{equation}
By taking the real parts of both sides of \eqref{eq:pr12}, we have
\begin{equation}\label{eq:pr13}
\int_{\Omega}\sigma\mathbf{E}^\omega\cdot\overline{\mathbf{E}^\omega}\, dV=-\Re\int_{\partial\Omega}(\bm{\nu}\times\overline{\mathbf{E}^\omega})\cdot\mathbf{H}^\omega\, ds.
\end{equation}
Using
\[
\mathbf{E}^\omega=\mathbf{E}^{i,\omega}+\mathbf{E}^{s,\omega},\quad \mathbf{H}^\omega=\mathbf{H}^{i,\omega}+\mathbf{H}^{s,\omega},
\]
and
\[
\bm{\nu}\times\mathbf{H}^{s,\omega}\big|_{\partial\Omega}=\Lambda_{\Omega,\omega}^o\big(\bm{\nu}\times\mathbf{E}^{s,\omega}\big|_{\partial\Omega}\big),
\]
one can calculate that 
\begin{equation*}
\begin{split}
&\int_\Omega\big(\bm{\nu}\times\overline{\mathbf{E}^\omega}\big)\cdot\mathbf{H}^\omega\, dV\\
=&\int_{\partial\Omega}\big(\bm{\nu}\times\overline{\mathbf{E}^{i,\omega}}\big)\cdot\mathbf{H}^{i,\omega}\, ds+\int_{\partial\Omega}\big(\bm{\nu}\times\overline{\mathbf{E}^{i,\omega}}\big)\cdot\big(\Lambda_{\Omega,\omega}^o\big(\bm{\nu}\times\mathbf{E}^{s,\omega}\big|_{\partial\Omega}\big)\times\bm{\nu} \big)\, ds\\
&+\int_{\partial\Omega}\big(\bm{\nu}\times\overline{\mathbf{E}^{s,\omega}}\big)\cdot\mathbf{H}^{i,\omega}\, ds
+\int_{\partial\Omega}\big(\bm{\nu}\times\overline{\mathbf{E}^{s,\omega}}\big)\cdot\big(\Lambda_{\Omega,\omega}^o\big(\bm{\nu}\times\mathbf{E}^{s,\omega}\big|_{\partial\Omega}\big)\times\bm{\nu} \big)\, ds.
\end{split}
\end{equation*}
By integration by parts and straightforward calculations, one has
\begin{equation*}
\int_{\partial\Omega}\big(\bm{\nu}\times\overline{\mathbf{E}^{i,\omega}}\big)\cdot\mathbf{H}^{i,\omega}\, ds=-\mathrm{i}\omega\mu_\infty\int_{\Omega}\big|\mathbf{H}^{i,\omega}\big|^2\, dV+\mathrm{i}\omega\epsilon_\infty\int_{\Omega}\big|\mathbf{E}^{i,\omega}\big|^2\, dV.
\end{equation*}
Clearly, we have
\begin{equation}\label{eq:pr16}
\int_{\Omega}\sigma\mathbf{E}^\omega\cdot\overline{\mathbf{E}^\omega}\, dV\geq \alpha_0\tau^{-1}\|\mathbf{E}^\omega\|^2_{L^2(\Omega\backslash\overline{\Sigma}; \mathbb{C}^3)}.
\end{equation}
Finally, by combining \eqref{eq:pr13}--\eqref{eq:pr16} and using the fact that the skew-symmetric bilinear form $\mathcal{B}: TH_{\mathrm{Div}}^{-1/2}(\partial\Omega)\times TH_{\mathrm{Div}}^{-1/2}(\partial\Omega)$ defined by
\[
\mathcal{B}(\mathbf{j},\mathbf{m})=\int_{\partial\Omega}\mathbf{j}\cdot(\mathbf{m}\times\bm{\nu})\, ds,\quad\forall \, \mathbf{j}, \mathbf{m}\in TH_{\mathrm{Div}}^{-1/2}(\partial\Omega),
\]
is a non-degenerate duality product (cf. \cite{CL}), one can easily show \eqref{eq:l11}. 

The proof is complete. 
\end{proof}

\begin{lem}\label{lem:l12}
There holds,
\begin{equation}\label{eq:l121}
\|\bm{\nu}\times\mathbf{E}^\omega\|_{TH_{\mathrm{Div}}^{-1/2}(\partial\Omega)}\leq C\|\mathbf{E}^\omega\|_{L^2(\Omega\backslash\overline{\Sigma}; \mathbb{C}^3)},
\end{equation}
where $C$ depends only on $\Omega$ and $\alpha_0, \omega, \epsilon_\infty, \mu_\infty$. 
\end{lem}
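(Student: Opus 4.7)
}
The strategy is to combine the standard tangential trace theorem for $H(\mathrm{curl})$ with a $\tau$-uniform $L^2$ bound on the magnetic field, obtained via a rescaling that removes $\tau$ from the coefficients of the Maxwell system in the conducting coat.

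\textit{Step 1 (Rescaling).} In $\Omega\setminus\overline{\Sigma}$, the system \eqref{eq:pr11} reads
\[
\nabla\times\mathbf{E}^\omega=\mathrm{i}\omega\tau\beta_0\mathbf{H}^\omega,\qquad \nabla\times\mathbf{H}^\omega=\tau^{-1}(\sigma_c-\mathrm{i}\omega\epsilon_c)\mathbf{E}^\omega.
\]
Setting $\tilde{\mathbf{H}}^\omega:=\tau\mathbf{H}^\omega$, the pair $(\mathbf{E}^\omega,\tilde{\mathbf{H}}^\omega)$ satisfies the Maxwell system
\[
\nabla\times\mathbf{E}^\omega=\mathrm{i}\omega\beta_0\tilde{\mathbf{H}}^\omega,\qquad \nabla\times\tilde{\mathbf{H}}^\omega=(\sigma_c-\mathrm{i}\omega\epsilon_c)\mathbf{E}^\omega
\]
in $\Omega\setminus\overline{\Sigma}$, whose coefficients are independent of $\tau$.

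\textit{Step 2 (Reduction).} By the continuity of the tangential trace $H(\mathrm{curl},\Omega\setminus\overline{\Sigma})\to TH^{-1/2}_{\mathrm{Div}}(\partial\Omega)$,
\[
\|\bm{\nu}\times\mathbf{E}^\omega\|_{TH^{-1/2}_{\mathrm{Div}}(\partial\Omega)}\leq C\,\|\mathbf{E}^\omega\|_{H(\mathrm{curl},\Omega\setminus\overline{\Sigma})}\leq C\bigl(\|\mathbf{E}^\omega\|_{L^2}+\omega\beta_0\|\tilde{\mathbf{H}}^\omega\|_{L^2}\bigr),
\]
so it suffices to prove the $\tau$-uniform estimate $\|\tilde{\mathbf{H}}^\omega\|_{L^2(\Omega\setminus\overline{\Sigma})}\leq C\|\mathbf{E}^\omega\|_{L^2(\Omega\setminus\overline{\Sigma})}$.

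\textit{Step 3 (Energy identity).} Testing the first rescaled equation against $\overline{\tilde{\mathbf{H}}^\omega}$ and integrating by parts on $\Omega\setminus\overline{\Sigma}$, I would obtain
\[
\mathrm{i}\omega\beta_0\|\tilde{\mathbf{H}}^\omega\|^2_{L^2}=\int_{\Omega\setminus\overline{\Sigma}}(\sigma_c+\mathrm{i}\omega\epsilon_c)\mathbf{E}^\omega\cdot\overline{\mathbf{E}^\omega}\,dV+\mathrm{BT}_{\partial\Omega}+\mathrm{BT}_{\partial\Sigma},
\]
where $\mathrm{BT}_\Gamma:=\int_\Gamma(\bm{\nu}\times\mathbf{E}^\omega)\cdot\overline{\tilde{\mathbf{H}}^\omega}\,dS$. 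Taking the imaginary part isolates $\omega\beta_0\|\tilde{\mathbf{H}}^\omega\|^2_{L^2}$ on the left against $\omega\int\epsilon_c|\mathbf{E}^\omega|^2$ plus the imaginary parts of the two boundary terms.

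\textit{Step 4 (Boundary terms and absorption).} The key observation is that $\tilde{\mathbf{H}}^\omega=\tau\mathbf{H}^\omega$ carries an explicit factor $\tau$ on each boundary. On $\partial\Omega$, using the non-degenerate skew-symmetric duality $\mathcal{B}$ together with the bounded exterior impedance map $\Lambda^o_{\Omega,\omega}$ and the decomposition $\bm{\nu}\times\mathbf{H}^\omega=\bm{\nu}\times\mathbf{H}^{i,\omega}+\Lambda^o_{\Omega,\omega}(\bm{\nu}\times\mathbf{E}^{s,\omega})$, one bounds $|\mathrm{BT}_{\partial\Omega}|$ by $C\tau\|\bm{\nu}\times\mathbf{E}^\omega\|_{TH^{-1/2}_{\mathrm{Div}}(\partial\Omega)}\bigl(\|\bm{\nu}\times\mathbf{E}^\omega\|_{TH^{-1/2}_{\mathrm{Div}}(\partial\Omega)}+\|\mathbf{E}^{i,\omega}\|_{H(\mathrm{curl})}+\|\mathbf{H}^{i,\omega}\|_{H(\mathrm{curl})}\bigr)$. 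On $\partial\Sigma$, continuity of tangential traces and the trace theorem applied from the annular side yield $\tau\|\bm{\nu}\times\mathbf{H}^\omega\|_{TH^{-1/2}_{\mathrm{Div}}(\partial\Sigma)}\leq C(\|\tilde{\mathbf{H}}^\omega\|_{L^2}+\|\mathbf{E}^\omega\|_{L^2})$, so that the singular $\tau^{-1}$ in $\|\mathbf{H}^\omega\|_{H(\mathrm{curl})}$ is exactly cancelled by the $\tau$ in $\tilde{\mathbf{H}}^\omega$. Feeding these into the imaginary part of the energy identity, together with the trace bound of Step~2 and Young's inequality, produces an estimate of the form
\[
\|\tilde{\mathbf{H}}^\omega\|^2_{L^2}\leq C\|\mathbf{E}^\omega\|^2_{L^2}+C\tau\bigl(\|\tilde{\mathbf{H}}^\omega\|^2_{L^2}+\|\bm{\nu}\times\mathbf{E}^\omega\|^2_{TH^{-1/2}_{\mathrm{Div}}(\partial\Omega)}\bigr);
\]
for $\tau$ sufficiently small the last group is absorbed into the left, giving $\|\tilde{\mathbf{H}}^\omega\|_{L^2}\leq C\|\mathbf{E}^\omega\|_{L^2}$ and hence \eqref{eq:l121}.

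The main obstacle is the treatment of the boundary contribution on $\partial\Sigma$: because the constant $C$ must be independent of the interior parameters $(\epsilon_b,\mu_b,\sigma_b)$, one cannot invoke an interior impedance map on $\partial\Sigma$; the argument must instead rely on the continuity of the tangential traces across $\partial\Sigma$ and the precise cancellation of $\tau$-factors dictated by the rescaling $\tilde{\mathbf{H}}^\omega=\tau\mathbf{H}^\omega$.
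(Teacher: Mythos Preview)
Your approach differs substantially from the paper's and contains a genuine gap. The paper proves the lemma by duality: it writes $\|\bm{\nu}\times\mathbf{E}^\omega\|_{TH^{-1/2}_{\mathrm{Div}}(\partial\Omega)}$ as a supremum of pairings $\langle\bm{\nu}\times\mathbf{E}^\omega,\bm{\psi}\rangle$ over $\bm{\psi}\in TH^{-1/2}_{\mathrm{Curl}}(\partial\Omega)$, and for each $\bm{\psi}$ constructs an auxiliary $\mathbf{F}\in H^2(\mathrm{curl},\Omega)$ with $\bm{\nu}\times\mathbf{F}=0$ on $\partial\Omega$, $\bm{\nu}\times\bm{\nu}\times(\nabla\times\mathbf{F})=\bm{\nu}\times\bm{\nu}\times\bm{\psi}$, $\|\mathbf{F}\|_{H^2(\mathrm{curl},\Omega)}\leq C\|\bm{\psi}\|$, and, crucially, $\mathbf{F}\equiv 0$ in $\Sigma$. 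Two integrations by parts together with the second--order equation $\nabla\times\nabla\times\mathbf{E}^\omega=\mathrm{i}\omega\beta_0(\sigma_c-\omega\epsilon_c)\mathbf{E}^\omega$ in $\Omega\setminus\overline{\Sigma}$ then give \eqref{eq:l121} directly, with no boundary contribution on $\partial\Sigma$ (because $\mathbf{F}$ vanishes there), no reference to the incident fields, and no smallness assumption on $\tau$.

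Your energy identity in Step~3, by contrast, produces a boundary term on $\partial\Sigma$ that you cannot close. After the rescaling $\tilde{\mathbf{H}}^\omega=\tau\mathbf{H}^\omega$, the integral $\int_{\partial\Sigma}(\bm{\nu}\times\mathbf{E}^\omega)\cdot\overline{\tilde{\mathbf{H}}^\omega}$ is already written in the rescaled variable and carries \emph{no} additional factor of $\tau$; the trace theorem from the annular side yields only
\[
|\mathrm{BT}_{\partial\Sigma}|\leq C\,\|\mathbf{E}^\omega\|_{H(\mathrm{curl},\Omega\setminus\overline{\Sigma})}\,\|\tilde{\mathbf{H}}^\omega\|_{H(\mathrm{curl},\Omega\setminus\overline{\Sigma})}\leq C\bigl(\|\mathbf{E}^\omega\|_{L^2}+\|\tilde{\mathbf{H}}^\omega\|_{L^2}\bigr)^2,
\]
so a term $C\|\tilde{\mathbf{H}}^\omega\|^2_{L^2}$ with an $O(1)$ constant appears on the right and cannot be absorbed into the left-hand side $\omega\beta_0\|\tilde{\mathbf{H}}^\omega\|^2_{L^2}$. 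Your final displayed inequality with an overall factor $C\tau$ in front of $\|\tilde{\mathbf{H}}^\omega\|^2_{L^2}$ is therefore not what the estimates actually deliver. Separately, handling $\mathrm{BT}_{\partial\Omega}$ via $\Lambda^o_{\Omega,\omega}$ injects $\|\mathbf{E}^{i,\omega}\|_{H(\mathrm{curl})}$ and $\|\mathbf{H}^{i,\omega}\|_{H(\mathrm{curl})}$, which do not appear in \eqref{eq:l121}; the lemma as stated is a purely local a~priori estimate with constant independent of the data. The paper's test-function device, by making $\mathbf{F}$ vanish in $\Sigma$, is precisely what removes the troublesome inner boundary and keeps the estimate data-free.
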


\begin{proof}
We shall make use of the following duality relation,  
\begin{equation}\label{eq:l122}
\|\bm{\nu}\times\mathbf{E}^\omega\|_{TH_{\mathrm{Div}}^{-1/2}(\partial\Omega)}=\sup_{\|\bm{\psi}\|_{TH^{-1/2}_{\mathrm{Curl}}(\partial\Omega)}\leq 1}\left|\int_{\partial\Omega}\Big(\bm{\nu}\times\mathbf{E}^\omega\Big)\cdot\bm{\psi}\, ds \right|
\end{equation}
For any $\bm{\psi}\in TH^{-1/2}_{\mathrm{Curl}}(\partial\Omega)$, we let $\mathbf{F}\in H^2(\mathrm{curl}, \Omega)$
be such that (see Lemma 3.5 in \cite{BLZ})
\begin{enumerate}
\item $\bm{\nu}\times\mathbf{F}=0$\ on\ $\partial \Omega$; \smallskip
\item $\bm{\nu}\times\bm{\nu}\times(\nabla\times\mathbf{F})=\bm{\nu}\times\bm{\nu}\times\bm{\psi}$;\smallskip
\item $\|\mathbf{F}\|_{H^2(\mathrm{curl}, \Omega)}\leq C\|\bm{\psi}\|_{TH^{-1/2}_{\mathrm{Curl}}(\partial\Omega)}$, where $C$ depends only on $\Omega$. \smallskip
\item $\mathbf{F}=0$ in $\Sigma$. 
\end{enumerate}
By virtue of the duality relation \eqref{eq:l122} and using the auxiliary function $\mathbf{F}$, along with the integration by parts, we have
\begin{equation}\label{eq:l23}
\begin{split}
&\int_{\partial\Omega}\Big(\bm{\nu}\times\mathbf{E}^\omega\Big)\cdot\bm{\psi}\, ds
=\int_{\partial\Omega}\Big(\bm{\nu}\times\mathbf{E}^\omega\Big)\cdot\Big(\bm{\nu}\times\big(\bm{\nu}\times(\nabla\times\mathbf{F}) \big) \Big)\, ds\\
=& \int_{\partial\Omega}\Big(\bm{\nu}\times(\nabla\times\mathbf{F})\Big)\cdot\mathbf{E}^\omega\ ds-\int_{\partial\Omega}\Big(\bm{\nu}\times(\nabla\times\mathbf{E}^\omega) \Big)\cdot \mathbf{F}\ ds\\
=& \int_{\Omega}\big(\nabla\times\nabla\times\mathbf{F} \big)\cdot\mathbf{E}^\omega\, dV-\int_{\Omega}\big(\nabla\times\nabla\times\mathbf{E}^\omega \big)\cdot\mathbf{F}\, dV. 
\end{split}
\end{equation}
Using the fact that in $\Omega\backslash\overline{\Sigma}$, 
\begin{equation*}
\begin{split}
&\nabla\times\mathbf{E}^\omega-\mathrm{i}\omega\tau\beta_0\mathbf{H}^\omega=0,\\
&\nabla\times\mathbf{H}^\omega+\mathrm{i}\omega\tau^{-1}\epsilon_c\mathbf{E}^\omega=\tau^{-1}\sigma_c\mathbf{E}^\omega,
\end{split}\ \ \text{in}\ \ \Omega\backslash\overline{\Sigma}, 
\end{equation*}
one has by direct verifications that
\begin{equation}\label{eq:l25}
\nabla\times\nabla\times\mathbf{E}^\omega=\mathrm{i}\omega\beta_0(\sigma_c-\omega\epsilon_c)\mathbf{E}^\omega\quad\mbox{in}\ \ \Omega\backslash\overline{\Sigma}. 
\end{equation}
Plugging \eqref{eq:l25} into \eqref{eq:l23}, one then has
\begin{equation}\label{eq:l126}
\begin{split}
&\left|\int_{\partial\Omega}\Big(\bm{\nu}\times\mathbf{E}^\omega\Big)\cdot\bm{\psi}\, ds \right|\\
=&\left| \int_{\Omega}\big(\nabla\times\nabla\times\mathbf{F} \big)\cdot\mathbf{E}^\omega\, dV-\mathrm{i}\omega\beta_0\int_{\Omega}(\sigma_c-\omega\epsilon_c)\mathbf{E}^\omega\cdot \mathbf{F}\, dV \right|\\
\leq & C\|\mathbf{F}\|_{H^2(\mathrm{curl}, \Omega)}\|\mathbf{E}^\omega\|_{L^2(\Omega;\mathbb{C}^3)}\\
\leq & C \|\mathbf{E}^\omega\|_{L^2(\Omega;\mathbb{C}^3)}\|\bm{\psi}\|_{TH^{-1/2}_{\mathrm{Curl}}(\partial\Omega)},
\end{split}
\end{equation}
where $C$ depends only on $\Omega$ and $\alpha_0, \beta_0, \omega$. Finally, by combining \eqref{eq:l122}
and \eqref{eq:l126}, one immediately has \eqref{eq:l121}. 

The proof is complete. 
\end{proof}

We are in a position to complete the proof of Theorem~\ref{thm:nonscattering1}. First, by Lemmas~\ref{lem:l11} and \ref{lem:l12}, we have
\begin{equation}\label{eq:l127}
\begin{split}
&\|\bm{\nu}\times\mathbf{E}^\omega\|_{TH_{\mathrm{Div}}^{-1/2}(\partial \Omega)}\\
\leq & C\tau^{1/2}\|\mathbf{E}^{\kappa_\omega}_a\|_{H(\text{{curl}},\Omega)}+C\tau^{1/2}\|\mathbf{E}_{\mathbf{a}}^{\kappa_\omega}\|^{1/2}_{H(\mathrm{curl}, \Omega)}\Big\|\bm{\nu}\times \mathbf{E}^{s,\omega}\Big\|^{1/2}_{TH_{\mathrm{Div}}^{-1/2}(\partial\Omega)}\\
&+C\tau^{1/2}\|\mathbf{H}_{\mathbf{a}}^{\kappa_\omega}\|^{1/2}_{H(\mathrm{curl}, \Omega)}\Big\|\bm{\nu}\times \mathbf{E}^{s,\omega}\Big\|^{1/2}_{TH_{\mathrm{Div}}^{-1/2}(\partial\Omega)}+C\tau^{1/2}\Big\|\bm{\nu}\times \mathbf{E}^{s,\omega}\Big\|_{TH_{\mathrm{Div}}^{-1/2}(\partial\Omega)},\\
\leq & 2C\tau^{1/2}\|\mathbf{E}^{\kappa_\omega}_a\|_{H(\text{{curl}},\Omega)}+\frac{3C}{2}\tau^{1/2}\Big\|\bm{\nu}\times \mathbf{E}^{s,\omega}\Big\|_{TH_{\mathrm{Div}}^{-1/2}(\partial\Omega)}\\
&+C\tau^{1/2}\|\mathbf{H}_{\mathbf{a}}^{\kappa_\omega}\|_{H(\mathrm{curl}, \Omega)}. 
\end{split}
\end{equation} 
Then for sufficiently small $\tau>0$ such that $3C\tau^{1/2}\leq 1$, we readily deduce from \eqref{eq:l127} that
\begin{equation}\label{eq:l128}
\|\bm{\nu}\times\mathbf{E}^\omega\|_{TH_{\mathrm{Div}}^{-1/2}(\partial \Omega)}\leq C\tau^{1/2}\big(\|\mathbf{E}_{\mathbf{a}}^{\kappa_\omega}\|_{H^1(\mathrm{curl}, \Omega)}+\|\mathbf{H}_{\mathbf{a}}^{\kappa_\omega}\|_{H(\mathrm{curl}, \Omega)}\big). 
\end{equation}
Since
\[
(\mathbf{E}^{i,\omega},\mathbf{H}^{i,\omega})=(\mathbf{E}_{\mathbf{a}}^{\kappa_\omega},\mathbf{H}_{\mathbf{a}}^{\kappa_\omega})\in\mathcal{H}_\varepsilon(\mathbf{Y}),
\]
one clearly has
\begin{equation}\label{eq:l129}
\|\bm{\nu}\times\mathbf{E}^{i,\omega}\|_{TH_{\mathrm{Div}}^{-1/2}(\partial\Omega)}=\|\bm{\nu}\times\mathbf{E}_{\mathbf{a}}^{\kappa_\omega}\|_{TH_{\mathrm{Div}}^{-1/2}(\partial\Omega)}\leq \varepsilon. 
\end{equation}
Finally, by \eqref{eq:l128} and \eqref{eq:l129}, we have
\begin{equation}\label{eq:l130}
\begin{split}
\|\bm{\nu}\times\mathbf{E}^{s,\omega}\|_{TH_{\mathrm{Div}}^{-1/2}(\partial\Omega)}\leq & \|\bm{\nu}\times\mathbf{E}^\omega\|_{TH_{\mathrm{Div}}^{-1/2}(\partial\Omega)}+\|\bm{\nu}\times\mathbf{E}^{i,\omega}\|_{TH_{\mathrm{Div}}^{-1/2}(\partial\Omega)}\\
\leq & C\tau^{1/2}\big(\|\mathbf{E}_{\mathbf{a}}^{\kappa_\omega}\|_{H(\mathrm{curl}, \Omega)}+\|\mathbf{H}_{\mathbf{a}}^{\kappa_\omega}\|_{H(\mathrm{curl}, \Omega)}\big)+\varepsilon,
\end{split}
\end{equation}
which readily implies \eqref{eq:est1}.

The proof is complete. \hfill $\Box$

\subsection{Proof of Theorem~\ref{thm:nonscattering2}}

The proof follows from a similar argument to that for Theorem~\ref{thm:nonscattering1}. We shall give the necessary modifications in what follows. 

By taking the real and imaginary parts of both sides of \eqref{eq:pr12}, respectively, one has
\begin{align}
& \int_{\Omega}\sigma\mathbf{E}^\omega\cdot\overline{\mathbf{E}^\omega}\, dV=-\Re \int_{\partial\Omega}(\bm{\nu}\times\overline{\mathbf{E}^\omega})\cdot\mathbf{H}^\omega\, ds; \label{eq:ima1}\\
&\omega \int_{\Omega}\mu\overline{\mathbf{H}^\omega}\cdot\mathbf{H}^\omega\, dV=\omega\int_{\Omega}\epsilon\overline{\mathbf{E}^\omega}\cdot\mathbf{E}^\omega\, dV-\Im\int_{\partial\Omega}(\bm{\nu}\times\overline{\mathbf{E}^\omega})\cdot\mathbf{H}^\omega\, ds. \label{eq:rea1}
\end{align}
Using the specific form of the EM parameters in \eqref{eq:emp12}, and their ellipticity, one has from \eqref{eq:ima1} that
\begin{equation}\label{eq:ima12}
\alpha_0\int_{\Sigma}\big|\mathbf{E}^\omega \big|^2\, dV+\alpha_0\tau\int_{\Omega\backslash\overline{\Sigma}}\big|\mathbf{E}^\omega\big|^2\, dV\leq \Big|\Re\int_{\partial\Omega}(\bm{\nu}\times\overline{\mathbf{E}^\omega})\cdot\mathbf{H}^\omega\, ds \Big|,
\end{equation}
and from \eqref{eq:rea1} that
\begin{equation}\label{eq:rea12}
\begin{split}
\alpha_0\tau^{-1}\int_{\Omega\backslash\overline{\Sigma}}\big|\mathbf{H}^\omega \big|^2\, dV\leq &\alpha_0^{-1}\omega\int_{\Sigma}\big|\mathbf{E}^\omega\big|^2\, dV+\alpha_0^{-1}\tau\omega\int_{\Omega\backslash\overline{\Sigma}}\big|\mathbf{E}^\omega \big|^2\, dV\\
& +\Big|\Im\int_{\partial\Omega}(\bm{\nu}\times\overline{\mathbf{E}^\omega})\cdot\mathbf{H}^\omega\, ds \Big|
\end{split}
\end{equation}
By combining \eqref{eq:ima12} and \eqref{eq:rea12}, and using a similar argument in deriving \eqref{eq:l11}, one can show a similar estimate to \eqref{eq:l11} for $\|\mathbf{H}^\omega\|^2_{L^2(\Omega\backslash\overline{\Sigma};\mathbb{C}^3)}$, but involving $\bm{\nu}\times\mathbf{H}^{s,\omega}\big|_{\partial\Omega}$ in the RHS of the inequality. It is remarked that the estimate here depends on $\epsilon_b, \sigma_b$, but independent of $\mu_b$. Next, by a completely similar argument in deriving \eqref{eq:l122}, along with the use of the specific form of the EM parameters given in \eqref{eq:emp12}, one can show that 
\begin{equation}\label{eq:eh1}
\|\bm{\nu}\times\mathbf{H}^\omega\|_{TH_{\mathrm{Div}}^{-1/2}(\partial\Omega)}\leq C\|\mathbf{H}^\omega\|_{L^2(\Omega\backslash\overline{\Sigma}; \mathbb{C}^3)},
\end{equation}
By combining \eqref{eq:ima12}, \eqref{eq:rea12} and \eqref{eq:eh1}, and following a similar argument to that in deriving \eqref{eq:l130}, one can show that
\begin{equation*}
\|\bm{\nu}\times\mathbf{H}^{s,\omega}\|_{TH_{\mathrm{Div}}^{-1/2}(\partial\Omega)} \leq C\tau^{1/2}\big(\|\mathbf{E}_{\mathbf{a}}^{\kappa_\omega}\|_{H(\mathrm{curl}, \Omega)}+\|\mathbf{H}_{\mathbf{a}}^{\kappa_\omega}\|_{H(\mathrm{curl}, \Omega)}\big)+\varepsilon,
\end{equation*}
which readily implies \eqref{eq:est12}.

The proof is complete. \hfill $\Box$

\subsection{Proof of Theorem~\ref{thm:nonscattering3}}

Since $\omega\in\mathcal{T}(\Sigma;\epsilon_b,\mu_b)$ and $(\mathbf{E}^{i,\omega},\mathbf{H}^{i,\omega})=(\mathbf{E}_{\mathbf{a}}^{\kappa_\omega},\mathbf{H}_{\mathbf{a}}^{\kappa_\omega})\in\mathcal{H}_\varepsilon(\mathbf{T}_\omega)$, we let $(\mathbf{E}^{t,\omega},\mathbf{H}^{t,\omega})$ and $(\mathbf{U}^\omega,\mathbf{V}^\omega)\in \mathbf{T}_\omega$ be the interior transmission eigenfunctions corresponding to $\omega$ such that 
\begin{equation}\label{eq:pr31}
\|\mathbf{E}^{i,\omega}-\mathbf{U}^\omega\|_{H(\text{curl},\Sigma)}\leq \varepsilon \ \ \mbox{and}\ \  \|\mathbf{H}^{i,\omega}-\mathbf{V}^\omega\|_{H(\text{curl},\Sigma)}\leq \varepsilon. 
\end{equation}
Using \eqref{eq:total}, one readily has that
\begin{equation}\label{eq:pr32}
\begin{cases}
&\nabla\times\mathbf{E}^\omega-\mathrm{i}\omega\mu_b\mathbf{H}^\omega=0\quad\text{in}\ \ \Sigma,\medskip\\
&\nabla\times\mathbf{H}^\omega+\mathrm{i}\omega\epsilon_b\mathbf{E}^\omega=0\quad\, \text{in}\ \ \Sigma,\medskip\\
&\bm{\nu}\times\mathbf{E}^\omega=\bm{\nu}\times\mathbf{E}^{s,\omega}+\bm{\nu}\times\mathbf{E}^{i,\omega}\quad\ \text{on}\ \ \partial\Sigma,\medskip\\
&\bm{\nu}\times\mathbf{H}^\omega=\bm{\nu}\times\mathbf{H}^{s,\omega}+\bm{\nu}\times\mathbf{H}^{i,\omega}\quad\text{on}\ \ \partial\Sigma;
\end{cases}
\end{equation}
and
\begin{equation}\label{eq:pr33}
\begin{cases}
& \nabla\times\mathbf{E}^{s,\omega}-\mathrm{i}\omega\mu_\infty\mathbf{H}^{s,\omega}=0\ \ \ \text{in}\ \
 \mathbb{R}^3\backslash\overline{\Sigma},\medskip\\
&\nabla\times\mathbf{H}^{s,\omega}+\mathrm{i}\omega\epsilon_\infty\mathbf{E}^{s,\omega}=0\ \ \ \text{in}\ \ \mathbb{R}^3\backslash\overline{\Sigma},\medskip\\
&\displaystyle{\lim_{\|{\mathbf{x}}\|\rightarrow+\infty}\big(\mu_\infty^{1/2}\mathbf{H}^{s,\omega}(\mathbf{x})\times \mathbf{x}-\|{\mathbf{x}}\| \epsilon_\infty^{1/2}\mathbf{E}^{s,\omega}(\mathbf{x})\big)=0. }
\end{cases}
\end{equation}
By subtracting \eqref{eq:gitp} from \eqref{eq:pr32}, one further has
\begin{equation}\label{eq:pr34}
\begin{cases}
&\nabla\times(\mathbf{E}^\omega-\mathbf{E}^{t,\omega})-\mathrm{i}\omega\mu_b(\mathbf{H}^\omega-\mathbf{H}^{t,\omega})=0\quad\text{in}\ \ \Sigma,\medskip\\
&\nabla\times(\mathbf{H}^\omega-\mathbf{H}^{t,\omega})+\mathrm{i}\omega\epsilon_b(\mathbf{E}^\omega-\mathbf{E}^{t,\omega})=0\quad\, \text{in}\ \ \Sigma,\medskip\\
&\bm{\nu}\times(\mathbf{E}^\omega-\mathbf{E}^{t,\omega})=\bm{\nu}\times\mathbf{E}^{s,\omega}+\bm{\nu}\times(\mathbf{E}^{i,\omega}-\mathbf{E}^{t,\omega})\quad\ \text{on}\ \ \partial\Sigma,\medskip\\
&\bm{\nu}\times(\mathbf{H}^\omega-\mathbf{H}^{t,\omega})=\bm{\nu}\times\mathbf{H}^{s,\omega}+\bm{\nu}\times(\mathbf{H}^{i,\omega}-\mathbf{H}^{t,\omega})\quad\text{on}\ \ \partial\Sigma;
\end{cases}
\end{equation}
Next, using the transmission condition on $\partial\Sigma$, we have 
\begin{align}
\bm{\nu}\times(\mathbf{E}^\omega-\mathbf{E}^{t,\omega})=& \bm{\nu}\times\mathbf{E}^{s,\omega}+\bm{\nu}\times(\mathbf{E}^{i,\omega}-\mathbf{E}^{t,\omega})\nonumber\\
=&\bm{\nu}\times\mathbf{E}^{s,\omega}+\bm{\nu}\times(\mathbf{E}^{i,\omega}-\mathbf{U}^\omega),\label{eq:pr35}\\
\bm{\nu}\times(\mathbf{H}^\omega-\mathbf{H}^{t,\omega})=&\bm{\nu}\times\mathbf{H}^{s,\omega}+\bm{\nu}\times(\mathbf{H}^{i,\omega}-\mathbf{H}^{t,\omega})\nonumber\\
=&\bm{\nu}\times\mathbf{H}^{s,\omega}+\bm{\nu}\times(\mathbf{H}^{i,\omega}-\mathbf{V}^{\omega}).\label{eq:pr36}
\end{align}
Clearly, by virtue of \eqref{eq:pr34}, there holds
\begin{equation}\label{eq:pr37}
\bm{\nu}\times(\mathbf{H}^{\omega}-\mathbf{H}^{t,\omega})=\bm{\nu}\times(\mathbf{E}^\omega-\mathbf{E}^{t,\omega})\big|_{\partial\Sigma}.
\end{equation}
By \eqref{eq:pr35},\eqref{eq:pr36} and \eqref{eq:pr37}, one can deduce as follows,
\begin{equation*}
\begin{split}
\bm{\nu}\times\mathbf{H}^{s,\omega}=&\Lambda_{\Sigma,\omega}^i\big(\bm{\nu}\times(\mathbf{E}^{\omega}-\mathbf{E}^{t,\omega})\big)-\bm{\nu}\times (\mathbf{H}^{i,\omega}-\mathbf{V}^\omega),\\
=&\Lambda_{\Sigma,\omega}^i(\bm{\nu}\times\mathbf{E}^{s,\omega})+\Lambda_{\Sigma,\omega}^i\big(\bm{\nu}\times(\mathbf{E}^{i,\omega}-\mathbf{U}^\omega)\big)-\bm{\nu}\times (\mathbf{H}^{i,\omega}-\mathbf{V}^\omega),\\
=&\Lambda_{\Sigma,\omega}^i\circ\big(\Lambda_{\Sigma,\omega}^{o}\big)^{-1}(\bm{\nu}\times\mathbf{H}^{s,\omega})+\Lambda_{\Sigma,\omega}^i\big(\bm{\nu}\times(\mathbf{E}^{i,\omega}-\mathbf{U}^\omega)\big)\\
&-\bm{\nu}\times (\mathbf{H}^{i,\omega}-\mathbf{V}^\omega),
\end{split}
\end{equation*}
which then gives
\begin{equation}\label{eq:pr39}
\begin{split}
&\Big(\mathbf{I}-\Lambda_{\Sigma,\omega}^i\circ\big(\Lambda_{\Sigma,\omega}^{o}\Big)^{-1}\big)(\bm{\nu}\times\mathbf{H}^\omega)\\
=&\Lambda_{\Sigma,\omega}^i\big(\bm{\nu}\times(\mathbf{E}^{i,\omega}-\mathbf{U}^\omega)\big)-\bm{\nu}\times (\mathbf{H}^{i,\omega}-\mathbf{V}^\omega).
\end{split}
\end{equation}
Using the non-transparency condition and \eqref{eq:pr31}, one readily deduces from \eqref{eq:pr39} that\begin{equation}\label{eq:pr40}
\|\bm{\nu}\times\mathbf{H}^\omega\|_{TH_{\text{Div}}^{-1/2}(\partial\Sigma)}\leq C\varepsilon,
\end{equation}
where $C$ depends on $\omega, \epsilon_\infty, \mu_\infty$ and $\Sigma$. Finally, by applying \eqref{eq:pr40} to \eqref{eq:pr33}, one immediately has \eqref{eq:est123}.

The proof is complete. \hfill $\Box$

\section*{Acknowledgement}

The work was supported by the FRG grants from Hong Kong Baptist University, Hong Kong RGC General Research Funds, 12302415 and 405513, and the NSF grant of China, No. 11371115.

\end{document}